\crefname{enumi}{item}{items}
\newcounter{generalnumbering}   \numberwithin{generalnumbering}{section}
\theoremstyle{plain} \newtheorem{theorem}[generalnumbering]{Theorem}
\theoremstyle{plain} \newtheorem{corollary}[generalnumbering]{Corollary}
\theoremstyle{definition} \newtheorem{definition}[generalnumbering]{Definition}
\theoremstyle{definition} \newtheorem{example}[generalnumbering]{Example}
\theoremstyle{plain} \newtheorem{proposition}[generalnumbering]{Proposition}
\theoremstyle{plain} \newtheorem{lemma}[generalnumbering]{Lemma}
\theoremstyle{definition} \newtheorem{remark}[generalnumbering]{Remark}
\theoremstyle{definition}
\newcommand{\namefordifferentenvironment}{}
\theoremstyle{plain}    \newtheorem{plainstyle}[generalnumbering]{\namefordifferentenvironment}
\theoremstyle{plain}    \newtheorem*{plainstyle*}{\namefordifferentenvironment}
\theoremstyle{definition}    \newtheorem{definitionstyle}[generalnumbering]{\namefordifferentenvironment}
\theoremstyle{definition}    \newtheorem*{definitionstyle*}{\namefordifferentenvironment}
\newenvironment{penv*}[1]{\renewcommand{\namefordifferentenvironment}{#1}\begin{plainstyle*}}{\end{plainstyle*}}
\newenvironment{denv*}[1]{\renewcommand{\namefordifferentenvironment}{#1}\begin{definitionstyle*}}{\end{definitionstyle*}}
\DeclareMathOperator{\Se}{\mathcal{S}}
\DeclareMathOperator{\Te}{\mathcal{T}}
\DeclareMathOperator{\Ge}{\mathcal{G}}
\DeclareMathOperator{\ix}{\mathcal{I}(X)}
\DeclareMathOperator{\id}{id}
\DeclareMathOperator{\dom}{dom}
\DeclareMathOperator{\ran}{ran}
\DeclareMathOperator{\orb}{Orb}
\newcommand*{\simrum}{\overset{\scriptscriptstyle{\ref{r1}}}{\sim}}
\newcommand*{\simrdois}{\overset{\scriptscriptstyle{\ref{r2}}}{\sim}}
\title{A $P$‑theorem for Inverse Semigroupoids through Ordered Globalizations}
\author{
\small{Paulinho Demeneghi}\\ 
\footnotesize{Universidade Federal de Santa Catarina, Florianópolis, Brazil}\\
\footnotesize{e-mail: \href{mailto:paulinho.demeneghi@ufsc.br}{paulinho.demeneghi@ufsc.br}}\\
\small{Willian Goulart Gomes Velasco}\\
\footnotesize{Universidade Tecnológica Federal do Paraná, Curitiba, Brazil}\\
\footnotesize{e-mail: \href{mailto:willianvelasco@protonmail.com}{willianvelasco@protonmail.com}}\\
\small{Víctor Marín}\\
\footnotesize{Universidad del Tolima, Ibagué, Colombia}\\
\footnotesize{e-mail:\href{mailto:vemarinc@ut.edu.co}{vemarinc@ut.edu.co}}\\
\small{Felipe Augusto Tasca}\\
\footnotesize{Instituto Federal do Paraná, União da Vitória, Brazil}\\
\footnotesize{e-mail: \href{mailto:tasca.felipe@gmail.com}{tasca.felipe@gmail.com}}
}
\date{}
\providecommand{\keywords}[1]
{
  \small	
  \textbf{\textit{}} #1
}    
\begin{document}

\maketitle

\begin{abstract}
We prove that every ordered partial action of an inverse semigroupoid on a partially ordered set admits a universal globalization. This result is used to establish a connection between ordered partial actions of groupoids and a multi-object analogue of McAlister triples. As a consequence, we obtain a multi-object version of the $P$-theorem: every $E$-unitary inverse semigroupoid is isomorphic to a semidirect product arising from an ordered partial action of a groupoid on a multi-object version of a semilattice.

\keywords{\textbf{\textit{Keywords:}} \textit{Ordered partial actions, Inverse Semigroupoids, $P$-theorem}}
\end{abstract}


\section{Introduction}

Since their introduction in the early 1950s, inverse semigroups have become one of the most extensively studied classes of semigroups. Among them, the subclass of $E$-unitary inverse semigroups, introduced by Saito in \cite{Saito} under the name \emph{proper inverse semigroups}, has played a particularly prominent role. Their significance is largely attributable to the foundational work of McAlister.

A landmark result in the theory is the structural theorem known as the $P$-theorem, proved by McAlister in \cite[Theorem 2.6]{McAlister2}. The central idea behind this theorem is that groups and semilattices, the simplest examples of $E$-unitary inverse semigroups, can be used as foundational components for building all $E$-unitary inverse semigroups.

A class of $E$-unitary inverse semigroup obtained in this way is that of $P$-semigroups. A $P$-semigroup is obtained from a McAlister triple $(G,E,X)$, which consists of a group $G$ acting by order automorphisms on a partially ordered set $E$, an order ideal $X$ of $E$ that is a meet semilattice under the induced order and the action is required to satisfy the conditions $G \cdot X = E$ and $g \cdot X \cap X \neq \emptyset$ for all $g \in G$. The $P$-semigroup associated with $(G,E,X)$ is then formed by $P(G,E,X)=\{(g,x)\in G\times X \mid g \cdot x \in X\}$ with product given by $(g,x)\cdot (h,y)=(gh,h^{-1}\cdot x \land y)$. The McAlister $P$-theorem asserts that every $E$-unitary inverse semigroup is isomorphic to a $P$-semigroup.

A different perspective was explored by Kellendonk and Lawson in \cite{KL2004}, who, building on Munn's earlier work \cite{Munn1976}, framed the $P$-theorem in terms of partial actions of groups on partially ordered sets. They observed that, in a McAlister triple $(G,E,X)$, the restriction of the global action of $G$ on the poset $E$ to the order ideal $X$ defines a partial action of $G$ on $X$ by order isomorphisms, and that the global action corresponds essentially to a globalization of this partial action. Then, they proved that every partial action of a group on a partially ordered set by order isomorphisms admits a globalization, and used this to construct McAlister triples as follows: given a partial action of a group $G$ on a semilattice $X$, they produced a globalization through an action of $G$ on a partially ordered set $E$ by order automorphisms, along with an order ideal $Y$ of $E$ such that $Y$ is order-isomorphic to $X$ on which the induced partial action of $G$ on $Y$ is equivalent to the original partial action. The resulting triple $(G,E,Y)$ is a McAlister triple.

One main goal of this paper is to establish a multi-object analogue of the $P$-theorem for $E$-unitary inverse semigroupoids. Such structures have appeared, for instance, in \cite{liu,steinberg_2003_lifting}  and a multi-object analogue of a McAlister triple may naturally be defined as a triple $(\Ge,E,X)$ in which $\Ge$ is a groupoid, $E$ is a partially ordered set and an order ideal $X$ of $E$ which is a semilatticeoid (a multi-object version of a semilattice) such that $\Ge$ acts on $E$ and the action satisfies $\Ge\cdot X = E$ and $g \cdot X \cap X \neq \emptyset$ for all $g\in\Ge$.

The framework of inverse semigroupoids offers a unified setting for treating both inverse semigroups and groupoids. The theory of inverse semigroupoids has seen increasing attention in recent years, with contributions such as \cite{velasco-muniz,Luiz2,Luiz1,demeneghi-tasca-2024,dewolf-ehresmann,thaisa2020,liu}. Informally, a semigroupoid can be regarded as a set equipped with a partially defined associative operation. Two distinct notions of semigroupoids appear in the literature: Tilson's, introduced in \cite{tilson}, and Exel's, presented in \cite{exelsemigroupoid1}. Although Exel's definition is more general, we adopt Tilson's due to its graph-based composition framework. Loosely speaking, a Tilson semigroupoid is a small category that may lack identity arrows. Ultimately, when the pseudo-inverse axiom is imposed in both frameworks, they give rise to equivalent notions, as proved by Cordeiro in \cite{Luiz1}.

To establish a multi-object analogue of the $P$-theorem for $E$-unitary inverse semigroupoids, we follow the approach developed by Kellendonk and Lawson \cite{KL2004}. This requires addressing the problem of the existence of globalizations for partial actions of groupoids on partially ordered sets. In this work, however, we adopt a more general perspective by considering the globalization problem for partial actions of inverse semigroupoids on partially ordered sets.

Partial actions of inverse semigroupoids on sets were introduced by Cordeiro in \cite{Luiz1}. In \cite{demeneghi-tasca-2024}, Demeneghi and Tasca proved that every partial action of an inverse semigroupoid on a set admits a universal globalization. We extend this result by introducing the notion of ordered partial actions of inverse semigroupoids on partially ordered sets (\cref{def:ordered_partial_action}) and prove that every ordered partial action of an inverse semigroupoid on a poset admits a globalization (\cref{o_teorema_1}). Furthermore, this globalization is universal among all global actions that extend the original partial action in an appropriate sense (\cref{prop:universal_globalization}).

We apply this result to establish a connection between generalized McAlister triples and ordered partial actions of groupoids on semilatticeoids (\cref{TMP}). This leads to the definition of a $P$-semigroupoid as an inverse semigroupoid isomorphic to a semidirect product arising from such a partial action. In particular, $P$-semigroupoids are shown to be $E$-unitary (\cref{prop:P-semigroup_is_E-unitary}).

Finally, by adapting Munn’s ideas to the multi-object setting, we establish a version of the $P$-theorem in this context (\cref{P-theoremConstruct2}). Specifically, we show that every $E$-unitary inverse semigroupoid $\Se$ is isomorphic to the semidirect product arising from the ordered partial action of its maximal groupoid image $\Se/\sigma$ on $E(\Se)$, induced by the Munn action of $\Se$ on $E(\Se)$ (\cref{Munn-global}).

Summarizing, the paper is structured as follows. For the convenience of the reader, we establish all the necessary preliminaries on inverse semigroupoids with particular focus on $E$-unitary inverse semigroupoids, in \cref{sec2}. In \cref{sec3}, we develop the theory of ordered actions of inverse semigroupoids on partially ordered sets an address the globalization problem for such actions. In \cref{P-theorem}, we explore the connection between the globalization problem for ordered groupoid actions and McAlister triples, culminating in a multi-object version of the $P$-theorem.


\section{Preliminaries on Inverse Semigroupoids}\label{sec2}

This opening section outlines the essential theory of inverse semigroupoids. For further reference, we recommend \cite{Luiz1, exelsemigroupoid2, exelsemigroupoid1, liu, tilson}.

In this paper, we employ the graphed semigroupoid definition - sometimes termed a semicategory - according to Tilson \cite{tilson}. By a graph, we mean a quadruple $(\Se,\Se^{(0)},d,c)$, where $\Se$ and $\Se^{(0)}$ are sets called the \emph{arrow} and \emph{object} sets, respectively, and $d,c: \Se \to \Se^{(0)}$ are maps referred to as the \emph{domain} and \emph{codomain} maps, respectively. We denote by $\Se^{(2)}\subseteq \Se \times \Se$ the set of all \emph{composable pairs}, that is, the set consisting of all pairs $(s,t) \in \Se \times \Se$ such that $d(s)=c(t)$.

\begin{definition}\label{def_semigroupoid}
A \emph{semigroupoid} is a quintuple $(\Se,\Se^{(0)},d,c,\mu)$, where $(\Se,\Se^{(0)},d,c)$ is a graph and $\mu: \Se^{(2)} \to \Se$ is a map, called the \emph{multiplication} map (denoted by juxtaposition), given by $\mu(s,t)=st$, satisfying the following conditions:
\begin{enumerate}[label=(\roman*)]
    \item\label{item_i} $d(st)=d(t)$ and $c(st)=c(s)$ for any composable pair $(s,t) \in \Se^{(2)}$;
    \item\label{item_ii} $(rs)t=r(st)$ whenever $(r,s), (s,t) \in \Se^{(2)}$.
\end{enumerate}
\end{definition}

Slightly abusing notation, we denote the semigroupoid $(\Se,\Se^{(0)},d,c,\mu)$ by $(\Se,\Se^{(0)})$, or simply by $\Se$. To emphasize the semigroupoid when needed, we employ subscripts, $d_{\Se}$ and $c_{\Se}$.

Notice that \cref{item_i} justifies \cref{item_ii} in the following sense: if $r,s,t \in \Se$ are elements such that $(r,s)$ and $(s,t)$ lie in $\Se^{(2)}$, then \cref{item_i} implies that $(rs,t)$ and $(r,st)$ also lie in $\Se^{(2)}$.

We assume throughout that $\Se^{(0)}=d(\Se)\cup c(\Se)$. This does not cause a loss of generality, as we can always replace $\Se^{(0)}$ by $d(\Se)\cup c(\Se)$, obtaining a new semigroupoid that satisfies the desired condition.

Semigroupoids encompass both semigroups and small categories.
In particular, a semigroup is a semigroupoid with exactly one object, and a small category is a semigroupoid endowed with an identity for each object.

To be precise, by an \emph{identity element} in a semigroupoid $\Se$, we mean an element $e \in \Se$ that satisfies $es=s$ and $te=t$ for every $s,t \in \Se$ whenever $(e,s), (t,e) \in \Se^{(2)}$. If necessary, we may say that $e \in \Se$ is an identity over an object $u \in \Se^{(0)}$ to emphasize that $e$ is an identity element such that $d(e)=c(e)=u$.

In a semigroupoid $\Se$, we call an element $e\in\Se$ \emph{idempotent} if $(e,e)\in\Se^{(2)}$ and $e^2=e$. The identity elements are then special cases of such idempotents. Once again, we may emphasize that $d(e)=c(e)=u$ by stating that $e$ is an idempotent over $u \in \Se^{(0)}$. We denote by $E(\Se)$ the set of idempotent elements of $\Se$.

We say that $s,t \in \Se$ are \emph{parallel} if $d(s)=d(t)$ and $c(s)=c(t)$.  

\begin{definition}
Let $\Se$ be a semigroupoid. A subset $\Te \subseteq \Se$ is said to be a \emph{subsemigroupoid} of $\Se$ if it is closed by multiplication.
\end{definition}

Notice that if $\Te$ is a subset of $\Se$ that is closed under multiplication, then setting $\Te^{(0)}=d(\Te)\cup c(\Te)$ and restricting the domain, codomain and multiplication maps appropriately, we obtain a semigroupoid $(\Te, \Te^{(0)},d,c,\mu)$.  

We focus on the class of semigroupoids known as inverse semigroupoids and, in particular, their subclass of groupoids.

\begin{definition}\label{def_inv_semigroupoid}
A semigroupoid $\Se$ is said to be an \emph{inverse semigroupoid} if, for every $s \in \Se$, there exists a unique element $s^* \in \Se$, called the \emph{inverse} of $s$, such that $(s,s^*), (s^*,s) \in \Se^{(2)}$ and  
$$ss^*s = s \quad \text{ and } \quad s^*ss^* = s^*.$$  
\end{definition}

In an inverse semigroupoid, we have $d(s^*)=c(s)$ and $c(s^*)=d(s)$ for every $s \in \Se$. Hence, the assumption $\Se^{(0)}=d(\Se)\cup c(\Se)$, which we previously imposed for general semigroupoids, becomes $\Se^{(0)}=d(\Se)=c(\Se)$ for inverse semigroupoids. This condition will be assumed throughout the remainder of this paper.

Similarly, just as semigroupoids generalize semigroups and small categories, the class of inverse semigroupoids encompasses inverse semigroups, groupoids, and inverse categories. In fact, inverse semigroups correspond to inverse semigroupoids with a single object, while groupoids are precisely those inverse semigroupoids that admit a unique idempotent over any given object $v \in \Se^{(0)}$. Moreover, inverse categories are inverse semigroupoids in which every object has an identity.

As our interest also lies in groupoids, we shall use $\Ge$ to denote an arbitrary groupoid. We identify each object $u\in\Ge^{(0)}$ with the identity over $u$. With this identification, we regard $\Ge^{(0)}$ as a subset $E(\Ge)$ of $\Ge$ and write $d(g)=g^{-1}g$ and $c(g)=gg^{-1}$ for every $g\in\Ge$.

In the framework of inverse semigroupoids, many standard results from the theory of inverse semigroups can be generalized with minor adjustments. For instance, the set $E(\Se)$ of idempotent elements commutes in the sense that if $e,f \in E(\Se)$ and $(e,f) \in \Se^{(2)}$, then $(f,e) \in \Se^{(2)}$ and $ef = fe$ (see Lemma 3.3.1 of \cite{liu}).

We now summarize the fundamental rules governing the involution on $\Se$: $(s^*)^* = s$ for all $s \in \Se$, and if $(s,t) \in \Se^{(2)}$, then $(t^*,s^*) \in \Se^{(2)}$ and $(st)^* = t^*s^*$. Moreover, for any $s \in \Se$, the elements $s^*s$ and $ss^*$ belong to $E(\Se)$, and for every $e \in E(\Se)$, we have $e^* = e$, which implies that every idempotent is of the form $s^*s$ for some $s \in \Se$. Finally, for any $s \in \Se$ and $e \in E(\Se)$ with $(e,s) \in \Se^{(2)}$, the element $s^*es$ belongs to $E(\Se)$. 

Since the idempotents in $E(\Se)$ commute,  we obtain a natural partial order on any inverse semigroupoid $\Se$. Specifically, for parallel arrows $s,t \in \Se$, we write $s \leqslant t$ if $s = te$ for some $e \in E(\Se)$ such that $(t,e) \in \Se^{(2)}$. According to Lemma 3.3.6 of \cite{liu},  $s \leqslant t $ if, and only if, $s = t\,s^*s$, or equivalent $s=ft$ for some $f \in E(\Se)$ with $(f,t) \in \Se^{(2)}$, or $s = ss^*t$.  

This order relation is called the \emph{natural partial order} on $\Se$. In particular, for any $s,t \in \Se$, we have $s \leqslant t$ if and only if $s^* \leqslant t^*$. Additionally, if $s_1, s_2, t_1, t_2 \in \Se$ satisfy $(s_1,s_2) \in \Se^{(2)}$, $s_1\leqslant t_1$, and $s_2\leqslant t_2$, then $(t_1,t_2) \in \Se^{(2)}$ and $s_1s_2\leqslant t_1t_2$. Furthermore, it follows directly from the definition that if $e,f \in E(\Se)$ and $(e,f)\in \Se^{(2)}$, then $ef\leqslant e,f$. Consequently, we obtain $s^*es\leqslant s^*s$ for all $s \in \Se$ and $e \in E(\Se)$ such that $(e,s) \in \Se^{(2)}$.  

\begin{remark}
Notice that an inverse semigroupoid $\Se$ is a groupoid if and only if the natural partial order coincides with the equality relation. Indeed, if $\Se$ is a groupoid, it is straightforward to verify that the natural partial order reduces to equality. Conversely, suppose that the natural partial order is the equality relation. If $e,f\in E(\Se)$ are idempotents over a given object $v \in \Se^{(0)}$, then $(e,f)\in\Se^{(2)}$ and $ef\leqslant e,f$, which implies $e=f$. Consequently, for each object $u \in \Se^{(0)}$, there exists a unique idempotent over $u$, and thus $\Se$ is a groupoid.
\end{remark}

Given two graphs $(\Se,\Se^{(0)},d_{\Se},c_{\Se})$ and $(\Te,\Te^{(0)},d_{\Te},c_{\Te})$, a morphism of graphs is a pair $(\varphi,\varphi^{(0)})$, where $\varphi\colon\Se\to\Te$ and $\varphi^{(0)}\colon\Se^{(0)}\to\Te^{(0)}$ are functions satisfying  
$$
d_{\Te}\circ\varphi=\varphi^{(0)}\circ d_{\Se} \quad \text{and} \quad c_{\Te}\circ\varphi=\varphi^{(0)}\circ c_{\Se}.
$$
In particular, a morphism of graphs  $(\varphi,\varphi^{(0)})$ preserves composability in the sense that $(\varphi(s),\varphi(t))\in\Te^{(2)}$ for every $(s,t)\in\Se^{(2)}$.

Conversely, if $d_{\Se}(\Se)=\Se^{(0)}=c_{\Se}(\Se)$ and $\varphi\colon\Se\to\Te$ is a function such that $(\varphi(s),\varphi(t))\in\Te^{(2)}$ for all $(s,t)\in\Se^{(2)}$, then there exists a unique function $\varphi^{(0)}\colon\Se^{(0)}\to\Te^{(0)}$ making $(\varphi,\varphi^{(0)})$ a morphism of graphs, as established in Proposition 2.19 of \cite{Luiz1}. This discussion motivates the following definition.

\begin{definition}\label{def_morphism}
Let $\Se$ and $\Te$ be semigroupoids. A map $\varphi\colon\Se\to\Te$ is said to be a \emph{morphism (of semigroupoids)} if $\big(\varphi(s),\varphi(t)\big)\in\Te^{(2)}$ and $\varphi(st)=\varphi(s)\varphi(t)$ for every pair $(s,t)\in\Se^{(2)}$.
\end{definition}

In the case where $\Se$ is an inverse semigroupoid, we assume that $d_{\Se}(\Se) = \Se^{(0)} = c_{\Se}(\Se)$. Thus, by the discussion preceding \cref{def_morphism}, for any morphism $\varphi\colon\Se\to\Te$, there exists a unique map $\varphi^{(0)}\colon\Se^{(0)}\to\Te^{(0)}$ such that $(\varphi,\varphi^{(0)})$ is a graph morphism. With this in mind, to emphasize the object map $\varphi^{(0)}$, we may sometimes refer to $(\varphi,\varphi^{(0)})$ as a morphism of semigroupoids.

Notice that if $\Se$ and $\Te$ are inverse semigroupoids and $\varphi\colon\Se\to\Te$ is a morphism of semigroupoids, then $\varphi$ necessarily preserves both inverses and the natural partial order. Specifically, for every $s\in\Se$, we have $\varphi(s^*)=\varphi(s)^*$, and for any parallel arrows $s,t\in\Se$ with $s\leqslant t$, it follows that $\varphi(s)\leqslant\varphi(t)$.

If $\varphi\colon\Se\to\Te$ is a morphism, one readily checks that $\varphi\big(E(\Se)\big)\subseteq E(\Te)$. In general, however, $\varphi$ may map non-idempotent elements of $\Se$ to idempotent elements of $\Te$.

\begin{definition}\label{idempotent_pure_map}
Let $\Se$ and $\Te$ be inverse semigroupoids. A semigroupoid morphism $\varphi\colon\Se\to\Te$ is said to be \emph{idempotent pure} if $\varphi^{-1}(E(\Te)) \subseteq E(\Se)$ (and hence, $\varphi^{-1}(E(\Te)) = E(\Se)$).
\end{definition}

An equivalence relation $R$ on graph $\Se$ is called \emph{graphed} if the domain and codomain maps are $R$-invariant, meaning that $d(s)=d(t)$ and $c(s)=c(t)$ for every pair $(s,t)\in R$.

\begin{definition}\cite[Definition 4.3]{Luiz1}
Let $\Se$ be a semigroupoid. A relation $R$ on $\Se$ is said to be a \emph{congruence} on $\Se$ if $R$ is a graphed equivalence relation on $\Se$ and $(s_1s_2, t_1t_2) \in R$ for all $(s_1, t_1), (s_2, t_2) \in R$ such that $(s_1, s_2), (t_1, t_2) \in \Se^{(2)}$. Two elements $s,t\in\Se$ are said to be $R$-\emph{congruent} if $(s,t)\in R$.
\end{definition}

Observe that the above definition requires only $(t_1, t_2) \in \Se^{(2)}$ or $(s_1, s_2) \in \Se^{(2)}$ since $d(s_1) = d(t_1)$, $c(t_2) = c(s_2)$, and $R$ is a graphed equivalence relation.

A congruence $R$  on $\Se$ endows the set of equivalent classes with a semigroupoid structure, where 
$(\Se/R)^{(0)}$ is identified with $\Se^{(0)}$ and $\Se/R$ itself becomes the arrow set. Since the domain and codomain maps $d, c\colon \Se \to \Se^{(0)}$ are $R$-invariant, they factor through $\Se/R$ to maps $d, c\colon \Se/R \to \Se^{(0)}$ satisfying $d(\pi_R(s)) = d(s)$ and $c(\pi_R(s)) = c(s)$ for every $s \in \Se$, where $\pi_R$ is the canonical quotient map. This provides a graph structure to $\Se/R$. It is easily seen that $(\pi_R, \id_{\Se^{(0)}})$ defines a graph morphism between $\Se$ and $\Se/R$. 

Define the product on $\Se/R$ by $\pi_R(s)\cdot\pi_R(t):=\pi_R(st)$,
so that $(\pi_R,\mathrm{id}_{\Se^{(0)}})$ becomes a semigroupoid morphism. Because $R$ is a congruence, this multiplication is well‑defined and associative, and the resulting structure, called the \emph{quotient semigroupoid}, indeed forms a semigroupoid.

When $\Se$ is an inverse semigroupoid, the quotient semigroupoid $\Se/R$ is also an inverse semigroupoid, with involution given by $\pi_R(s)^* = \pi_R(s^*)$ for all $s \in \Se$, as stated in Proposition 4.5 of \cite{Luiz1}. Consequently, any congruence $R$ on an inverse semigroupoid respects the involution, meaning that $(s^*, t^*) \in R$ for all $(s, t) \in R$.  

Let $\Se$ be an inverse semigroupoid. We define a relation $\sigma$ on $\Se$ by $(s,t) \in \sigma$ if and only if there exists $r \in \Se$ such that $r \leqslant s, t$. In this fashion, $\sigma$ is a congruence on $\Se$ and $\Se / \sigma$ is a groupoid as follows.

\begin{proposition}
Let $\Se$ be an inverse semigroupoid and let $\sigma$ be the relation defined above. Then, $\sigma$ is a congruence on $\Se$, and $\Se/\sigma$ is a groupoid. Furthermore, if $\Ge$ is a groupoid and $\varphi\colon\Se\to\Ge$ is a morphism of semigroupoids, then there exists a unique morphism $\tilde{\varphi}\colon\Se/\sigma\to\Ge$ making the following diagram commute:
$$\xymatrix{\Se \ar[rr]^{\varphi} \ar[rd]_{\pi_{\sigma}}& & \Ge \\ & \Se/\sigma \ar@{..>}[ru]_{\tilde{\varphi}}}$$
\end{proposition}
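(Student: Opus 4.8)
The plan is to check, in order, that $\sigma$ is a graphed equivalence relation, that it is compatible with the multiplication, that the quotient $\Se/\sigma$ is a groupoid, and finally that $(\Se/\sigma,\pi_\sigma)$ has the claimed universal property. Throughout I would use without further comment the recalled facts about the natural partial order — transitivity, preservation under morphisms, and the implication that $(s_1,s_2)\in\Se^{(2)}$ together with $s_1\leqslant t_1$ and $s_2\leqslant t_2$ gives $(t_1,t_2)\in\Se^{(2)}$ and $s_1s_2\leqslant t_1t_2$. First note that $r\leqslant s$ forces $s$ and $r$ to be parallel, so every pair in $\sigma$ consists of parallel arrows and $\sigma$ is automatically graphed; reflexivity ($s\leqslant s$) and symmetry are immediate.

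The substantive point is transitivity. Given $r_1\leqslant s,t$ and $r_2\leqslant t,u$, I would exhibit a common lower bound of $r_1$ and $r_2$, namely $r\defeq r_1r_1^*r_2$: since $r_1r_1^*\in E(\Se)$ is composable on the left of $r_2$, one has $r\leqslant r_2$, and using the equivalent descriptions $r_1=r_1r_1^*t$ and $r_2=t\,r_2^*r_2$ of $r_1,r_2\leqslant t$ one computes $r=r_1r_1^*(t\,r_2^*r_2)=(r_1r_1^*t)\,r_2^*r_2=r_1(r_2^*r_2)\leqslant r_1$. Transitivity of the natural partial order then yields $r\leqslant s$ and $r\leqslant u$, so $(s,u)\in\sigma$. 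This is the multi-object incarnation of the classical fact that two elements below a common element of an inverse semigroup admit a common lower bound, and keeping track of which products are defined is the only place where the semigroupoid setting demands genuine care.

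For the congruence property, take $(s_1,t_1),(s_2,t_2)\in\sigma$ with $(s_1,s_2)\in\Se^{(2)}$ and pick $r_i\leqslant s_i,t_i$; since $r_i$ is parallel to $s_i$, the pair $(r_1,r_2)$ is composable, and the quoted property of the natural partial order gives simultaneously $r_1r_2\leqslant s_1s_2$ and $r_1r_2\leqslant t_1t_2$, so $(s_1s_2,t_1t_2)\in\sigma$. Hence $\sigma$ is a congruence, and by the results recalled above $\Se/\sigma$ is an inverse semigroupoid with $\pi_\sigma(s)^*=\pi_\sigma(s^*)$. To see it is a groupoid I would invoke the Remark: it suffices to show each object carries a unique idempotent. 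Every idempotent $\bar g=\pi_\sigma(s)$ of $\Se/\sigma$ satisfies $\bar g=\bar g^*\bar g=\pi_\sigma(s^*s)$ with $s^*s\in E(\Se)$, so idempotents of $\Se/\sigma$ are exactly the $\pi_\sigma(e)$ with $e\in E(\Se)$; and if $e,f\in E(\Se)$ lie over one object then $(e,f)\in\Se^{(2)}$ and $ef\leqslant e,f$, whence $(e,f)\in\sigma$ and $\pi_\sigma(e)=\pi_\sigma(f)$.

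Finally, for the universal property, let $\varphi\colon\Se\to\Ge$ be a morphism into a groupoid. Because the natural partial order on $\Ge$ is equality and $\varphi$ preserves the natural partial order, $r\leqslant s,t$ forces $\varphi(s)=\varphi(r)=\varphi(t)$; thus $\varphi$ is constant on $\sigma$-classes, and $\tilde\varphi\colon\Se/\sigma\to\Ge$, $\pi_\sigma(s)\mapsto\varphi(s)$, is a well-defined map. It is a morphism of semigroupoids, since composability in $\Se/\sigma$ descends from composability in $\Se$ (as $\pi_\sigma$ is the identity on objects) and $\tilde\varphi(\pi_\sigma(s)\pi_\sigma(t))=\varphi(st)=\varphi(s)\varphi(t)$, and $\tilde\varphi\circ\pi_\sigma=\varphi$ holds by construction; uniqueness is forced by surjectivity of $\pi_\sigma$. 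I expect the transitivity of $\sigma$ to be the main obstacle; the remaining steps are faithful transcriptions of the inverse-semigroup arguments with composability side-conditions inserted where required.
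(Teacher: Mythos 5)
Your argument is correct, but it is worth noting that the paper does not actually prove this proposition: its ``proof'' consists of citing Lemma~4.11, Proposition~4.14 and Proposition~4.16 of Cordeiro's paper \cite{Luiz1}, whereas you supply a complete, self-contained verification. All the substantive steps check out. In particular, your transitivity argument is sound: with $r_1\leqslant s,t$ and $r_2\leqslant t,u$, the element $r=r_1r_1^*r_2$ is defined because $c(r_2)=c(t)=c(r_1)=d(r_1r_1^*)$, it satisfies $r\leqslant r_2$ by construction, and the computation $r=r_1r_1^*(t\,r_2^*r_2)=(r_1r_1^*t)r_2^*r_2=r_1(r_2^*r_2)\leqslant r_1$ is legitimate since all the intermediate products are composable and it uses only the characterizations $r_1=r_1r_1^*t$ and $r_2=t\,r_2^*r_2$ recalled in the preliminaries. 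The congruence step, the identification of the idempotents of $\Se/\sigma$ as the classes $\pi_\sigma(e)$ with $e\in E(\Se)$ together with $\pi_\sigma(e)=\pi_\sigma(f)$ for coterminal idempotents (which gives the groupoid property via the unique-idempotent-per-object characterization stated in Section~2), and the universal property via order-preservation of $\varphi$ and triviality of the order on a groupoid are all exactly the arguments one would expect and are carried out with the composability side-conditions in place. What your version buys is independence from the external reference at the cost of a page of routine verification; what the paper's version buys is brevity, since these facts are already established in the cited source in the same generality.
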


\begin{proof}
It follows from Lemma 4.11, Proposition 4.14, and Proposition 4.16 of \cite{Luiz1}.
\end{proof}

The congruence $\sigma$ is called \emph{minimal groupoid congruence} on $\Se$ and the groupoid $\Se/\sigma$ is called \emph{maximal groupoid image} of $\Se$.

We prove an auxiliary result that establishes useful equivalences for determining when two elements are $\sigma$-congruent.

\begin{proposition}\label{sigma_equivalence}
Let $\Se$ be an inverse semigroupoid and let $s,t \in \Se$ be parallel arrows. The following conditions are equivalent:
\begin{enumerate}[label=(\roman*)]
    \item\label{ce1} $s \sigma t$.
    \item\label{ce2} There exists $e \in E(\Se)$ such that $(s,e), (t,e) \in \Se^{(2)}$ and $se = te$.
    \item\label{ce3} There exists $f \in E(\Se)$ such that $(f,s), (f,t) \in \Se^{(2)}$ and $fs = ft$.
\end{enumerate}
\end{proposition}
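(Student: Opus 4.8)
The plan is to establish the two biconditionals (i)$\Leftrightarrow$(ii) and (i)$\Leftrightarrow$(iii) separately, exploiting the right- and left-handed descriptions of the natural partial order recalled in \cref{sec2}: for parallel arrows $u,v\in\Se$, one has $u\leqslant v$ precisely when $u=v\,u^*u$, equivalently when $u=uu^*v$. For (i)$\Rightarrow$(ii), choose $r\in\Se$ with $r\leqslant s$ and $r\leqslant t$; then $r$, $s$ and $t$ are pairwise parallel and $r=s\,r^*r=t\,r^*r$. Put $e\defeq r^*r\in E(\Se)$. The pairs $(s,e)$ and $(t,e)$ lie in $\Se^{(2)}$ because $c(e)=d(r)=d(s)=d(t)$, and then $se=r=te$, which is (ii). Conversely, for (ii)$\Rightarrow$(i), given such an $e$ set $r\defeq se=te$; since $e$ is idempotent, $se$ is parallel to $s$ and $te$ is parallel to $t$, so $r\leqslant s$ and $r\leqslant t$ straight from the definition of the natural partial order, whence $(s,t)\in\sigma$.

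The equivalence (i)$\Leftrightarrow$(iii) is the mirror image, obtained by replacing the identity $r=s\,r^*r$ with $r=rr^*s$ throughout: the witnessing idempotent becomes $f\defeq rr^*\in E(\Se)$, which is composable with $s$ and $t$ since $d(f)=c(r)=c(s)=c(t)$, and in the other direction one sets $r\defeq fs=ft$ and observes that $fs$ is parallel to $s$, so $r\leqslant s$ and $r\leqslant t$. Alternatively, (i)$\Leftrightarrow$(iii) can be deduced from (i)$\Leftrightarrow$(ii) applied to the parallel pair $s^*,t^*$, since both $\sigma$ and $\leqslant$ are preserved by the involution and $(se)^*=es^*$.

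The only point that requires attention, and the only place where the semigroupoid setting genuinely differs from the classical statement for inverse semigroups, is checking at each step that every pair of elements multiplied together is composable. This is never a real obstacle: the relation $u\leqslant v$ already forces $u$ and $v$ to be parallel, which pins down the domain and codomain of each idempotent involved. I therefore expect the proof to be a careful but routine transcription of the inverse-semigroup argument, with the composability bookkeeping threaded through.
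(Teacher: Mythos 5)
Your proof is correct and follows essentially the same route as the paper: both arguments pass from a common lower bound $r\leqslant s,t$ to an idempotent that equalizes $s$ and $t$, and note that the converse directions are immediate from the definition of $\leqslant$. The only cosmetic difference is that you take the single canonical witness $e=r^*r$ (resp.\ $f=rr^*$), whereas the paper multiplies the two separate witnessing idempotents $e_1,e_2$ from $r=se_1=te_2$; your choice is, if anything, slightly cleaner.
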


\begin{proof}
Assuming \ref{ce1}, let $r \leqslant s,t$ and let $e_1, e_2 \in E(\Se)$ be such that $(s, e_1), (t, e_2) \in \Se^{(2)}$ and $se_1 = r = te_2$. Since $c(e_1) = d(t) = d(s) = c(e_2)$, we have $(e_1, e_2) \in \Se^{(2)}$. Setting $e = e_1 e_2$, it is easily seen that $se = te$, thus concluding \ref{ce2}. The implication $\ref{ce1} \implies \ref{ce3}$ can be proved in a similar way, and the implications $\ref{ce2} \implies \ref{ce1}$ and $\ref{ce3} \implies \ref{ce1}$ are evident.
\end{proof}

Motivated by the equivalences just proved, we now single out those congruences on $\Se$ that cannot relate non‑idempotent arrows onto idempotents.

\begin{definition}\label{idempotent_pure_congruence}
Let $\Se$ be an inverse semigroupoid. A congruence $R$ on $\Se$ is said to be \emph{idempotent pure} if $(s,e)\in R$ and $e\in E(\Se)$ imply $s\in E(\Se)$.
\end{definition}

The following proposition shows that this idempotent‑purity condition admits several equivalent characterizations, linking the behavior of the quotient map to the involutive structure of $\Se$.

\begin{proposition}\cite[Proposition 4.20]{Luiz1}\label{idempotent_pure_equiv}
Let $\Se$ be an inverse semigroupoid and $R$ a congruence on $\Se$. The following conditions are equivalent:
\begin{enumerate}[label=(\roman*)]
    \item $R$ is idempotent pure;
    \item The canonical quotient map $\pi_{R}: \Se\to \Se/R$ is idempotent pure;
    \item If $(s,t)\in R$, then $s^*t\in E(\Se)$ (and also $st^*\in E(\Se)$).
\end{enumerate}
\end{proposition}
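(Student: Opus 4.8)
The plan is to prove the equivalences by first disposing of (i) $\Leftrightarrow$ (ii) as a reformulation, and then establishing (i) $\Leftrightarrow$ (iii), which is the real content. For the first part, note that $\pi_R$ carries idempotents to idempotents, and that $\pi_R(s)\in E(\Se/R)$ forces $\pi_R(s)=\pi_R(s)^*\pi_R(s)=\pi_R(s^*s)$; hence $\pi_R^{-1}\big(E(\Se/R)\big)$ is precisely the set of arrows that are $R$-congruent to an idempotent, namely to $s^*s$ (the reverse containment being immediate, since $s\,R\,e$ with $e$ idempotent gives $\pi_R(s)=\pi_R(e)\in E(\Se/R)$). Since (i) asserts that this very set is contained in $E(\Se)$, and (ii) asserts that $\pi_R^{-1}\big(E(\Se/R)\big)\subseteq E(\Se)$, the two are literally the same statement.

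For (i) $\Leftrightarrow$ (iii), the workhorse is the congruence axiom together with three facts recalled in \cref{sec2}: a congruence on an inverse semigroupoid respects the involution, $e^*=e$ for every idempotent $e$, and $ss^*s=s$ for every arrow $s$; moreover, because $R$ is graphed, $(s,t)\in R$ gives $d(s)=d(t)$ and $c(s)=c(t)$, so all composability conditions that come up are automatic. For (i) $\Rightarrow$ (iii): given $(s,t)\in R$, composing the trivial pair $(s^*,s^*)\in R$ with $(s,t)$ yields $(s^*s,\,s^*t)\in R$, and since $s^*s\in E(\Se)$, condition (i) forces $s^*t\in E(\Se)$; composing $(s,t)$ with $(t^*,t^*)\in R$ gives $st^*\in E(\Se)$ the same way. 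For (iii) $\Rightarrow$ (i): if $(s,e)\in R$ with $e\in E(\Se)$, then, as in the first paragraph, $(s,s^*s)\in R$; the ``$st^*$'' half of (iii) applied to this pair gives $s\,(s^*s)^{*}\in E(\Se)$, and since $s^*s$ is its own inverse, $s\,(s^*s)^{*}=s\,(s^*s)=(ss^*)s=s$, so $s\in E(\Se)$.

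I expect essentially no obstacle: once the bridging description of $\pi_R^{-1}\big(E(\Se/R)\big)$ is in hand and the composability bookkeeping is dispatched (which it is, automatically, by graphedness), the argument is the classical inverse-semigroup one carried over verbatim. The single step that carries any idea at all is recognizing that the second clause of (iii), evaluated on the pair $(s,s^*s)$, produces the arrow $s\,(s^*s)^{*}=ss^*s=s$ itself, so that a formal consequence of (iii) is already the idempotency of $s$.
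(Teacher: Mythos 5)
Your proof is correct. Note that the paper itself gives no proof of this proposition---it is quoted from Cordeiro (Proposition 4.20 of the cited reference)---so there is nothing internal to compare against; your argument (identifying $\pi_R^{-1}\big(E(\Se/R)\big)$ with the arrows $R$-congruent to an idempotent for (i)$\Leftrightarrow$(ii), and using graphedness plus the congruence axiom with the pairs $(s^*,s^*)$, $(t^*,t^*)$ and $(s,s^*s)$ for (i)$\Leftrightarrow$(iii)) is the standard one and all composability checks go through.
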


The next definition introduces $E$-unitary inverse semigroupoids, a generalization of the usual notion of $E$-unitary semigroup and the main object of study in this work. This notion has appeared in the literature in \cite{liu}, and in the context of inverse categories in \cite{steinberg_2003_lifting}.

\begin{definition}
An inverse semigroupoid $\Se$ is said to be \emph{$E$-unitary} if its minimal groupoid congruence is idempotent pure.
\end{definition}

Replacing $R$ with $\sigma$ in \cref{idempotent_pure_equiv}, we obtain the following result.

\begin{proposition}\label{e_unitary_equiv}
Let $\Se$ be an inverse semigroupoid. The following conditions are equivalent:  
\begin{enumerate}[label=(\roman*)]
\item $\Se$ is $E$-unitary, that is, $\sigma$ is idempotent pure;
\item The canonical quotient map $\pi_{\sigma}: \Se\to \Se/\sigma$ is idempotent pure;
\item If $(s,t)\in \sigma$, then $s^*t\in E(\Se)$ and $st^*\in E(\Se)$;
\item $(s,t)\in \sigma$ if and only if $s^*t\in E(\Se)$ and $st^*\in E(\Se)$;
\item If $s\in\Se$, $e\in E(\Se)$ and $e\leqslant s$, then $s\in E(\Se)$.
\end{enumerate}  
\end{proposition}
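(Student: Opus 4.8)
The plan is to use \cref{idempotent_pure_equiv} as a black box and merely enlarge its list. Specializing that proposition to the congruence $R=\sigma$, and recalling that $\Se$ is $E$-unitary exactly when $\sigma$ is idempotent pure, yields the equivalences (i)$\Leftrightarrow$(ii)$\Leftrightarrow$(iii) at once. It then remains to graft (iv) onto this block and to prove (i)$\Leftrightarrow$(v); I would handle these as (iii)$\Leftrightarrow$(iv), (i)$\Rightarrow$(v) and (v)$\Rightarrow$(i).

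For (iii)$\Leftrightarrow$(iv): the direction ``$s\sigma t\ \Rightarrow\ s^*t,\,st^*\in E(\Se)$'' of the biconditional in (iv) is exactly statement (iii), so (iii) supplies it and, trivially, (iv) implies (iii). The only content is the reverse direction of (iv), and the point to stress is that it holds in \emph{every} inverse semigroupoid, with no appeal to $E$-unitarity. Namely, if $s^*t\in E(\Se)$ then automatically $d(s^*)=c(t)$ forces $c(s)=c(t)$ and idempotency of $s^*t$ forces $d(s)=d(t)$, so $s,t$ are parallel; then $r\defeq s(s^*t)=(ss^*)t$ is well defined, and since $r=s\cdot(s^*t)$ is $s$ times an idempotent we get $r\leqslant s$, while $r=(ss^*)\cdot t$ is an idempotent times $t$, so $r\leqslant t$. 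Thus $r\leqslant s,t$ witnesses $s\sigma t$.

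For (i)$\Rightarrow$(v): if $e\in E(\Se)$ and $e\leqslant s$, then $e\leqslant s$ and $e\leqslant e$ make $e$ a common lower bound of $s$ and $e$, so $(s,e)\in\sigma$; idempotent purity of $\sigma$ together with $e\in E(\Se)$ then force $s\in E(\Se)$. For (v)$\Rightarrow$(i): given $(s,e)\in\sigma$ with $e\in E(\Se)$, choose $r$ with $r\leqslant s$ and $r\leqslant e$. From $r\leqslant e$ one writes $r=ef$ with $f\in E(\Se)$ and $(e,f)\in\Se^{(2)}$, a product of composable idempotents, hence $r\in E(\Se)$ (the familiar fact that an arrow below an idempotent is idempotent, which I would recall in a line using that idempotents commute, or cite from \cite{liu}). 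Then $r\leqslant s$ and (v) give $s\in E(\Se)$, so $\sigma$ is idempotent pure.

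The argument is essentially routine; the one thing demanding care is the domain/codomain book-keeping needed to check that each written product is a genuine composable pair and that the characterizations of $\leqslant$ collected in \cref{sec2} apply, and the single conceptual observation worth isolating is that the nontrivial half of (iv) is a general identity in inverse semigroupoids, so it enters the equivalence only through (iii). I do not expect any genuine obstacle.
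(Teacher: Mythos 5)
Your proposal is correct and follows essentially the same route as the paper: (i)--(iii) are obtained by specializing \cref{idempotent_pure_equiv} to $R=\sigma$, the nontrivial half of (iv) is established by the same computation (the paper writes it as $ss^*t=ts^*t$ and invokes \cref{sigma_equivalence}, while you exhibit the common lower bound $r=s(s^*t)=(ss^*)t\leqslant s,t$ directly, which is the same idea), and your arguments for (v) match the paper's. The only cosmetic difference is that the paper closes the cycle via (iv)$\Rightarrow$(v) where you prove (i)$\Rightarrow$(v) directly; both decompositions are complete.
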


\begin{proof}
The equivalences (i)-(ii)-(iii) are direct from \cref{idempotent_pure_equiv}. To prove (iii) $\Rightarrow$ (iv), let $s,t\in\Se$ such that $s^*t\in E(\Se)$. Then  
$$
ss^*t=ss^*tt^*t=tt^*ss^*t=t(s^*t)^*(s^*t)=ts^*t
$$
and hence $(s,t)\in\sigma$.  

To prove (iv) $\Rightarrow$ (v), let $s\in\Se$ and $e\in E(\Se)$ such that $e\leqslant s$. Then $e\leqslant ss^*$ and hence $(ss^*,s)\in\sigma$. By item (iv), we have $s=ss^*s\in E(\Se)$.  

To prove (v) $\Rightarrow$ (i), let $s\in\Se$ and $e\in E(\Se)$ such that $(s,e)\in\sigma$. Then, there exists $r\in\Se$ such that $r\leqslant s,e$. Since $r\leqslant e$, we have $r\in E(\Se)$. By (v), we have $s\in E(\Se)$ (since $r\leqslant s$ and $r\in E(\Se)$).  
\end{proof}

Building on the above equivalences, we can now prove a technical lemma showing that any two $\sigma$‑congruent parallel arrows satisfy $st^*t = ts^*s$.

\begin{lemma}\label{lema-tecnico-idempotentes}
Let $\Se$ be an $E$-unitary inverse semigroupoid. If $s,t\in\Se$ are $\sigma$-congruent parallel arrows, then $st^*t = ts^*s$.
\end{lemma}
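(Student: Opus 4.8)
The plan is to convert the hypothesis $s\sigma t$ into a couple of algebraic identities among idempotents and then finish by purely formal manipulation, since there is no cancellation available in an inverse semigroupoid. Since $\Se$ is $E$-unitary and $s\sigma t$, \cref{e_unitary_equiv} tells us that $s^*t$ and $st^*$ both lie in $E(\Se)$ (both are well-defined loops, at $d(s)=d(t)$ and $c(s)=c(t)$ respectively, precisely because $s$ and $t$ are parallel). Because every idempotent equals its own inverse and $(ab)^*=b^*a^*$, this immediately yields the two ``symmetry'' identities $s^*t=t^*s$ and $st^*=ts^*$, which will do the work that cancellation does in the single-object case.

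With these in hand, I would first record the auxiliary identity $ss^*t=ts^*s$: it follows from the short chain $ss^*t=s(s^*t)=s(t^*s)=(st^*)s=(ts^*)s=ts^*s$, each equality being an application of one of the two symmetry identities together with associativity (all the relevant pairs being composable, again by parallelism). Next, I would expand the idempotent $st^*$ as $st^*=(st^*)(st^*)=s(t^*s)t^*=s(s^*t)t^*=(ss^*)(tt^*)$; multiplying on the right by $t$ and using $tt^*t=t$ then gives $st^*t=(ss^*)(tt^*)t=ss^*t$. Combining this with the auxiliary identity produces $st^*t=ss^*t=ts^*s$, as claimed.

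The only genuine subtlety is the absence of cancellation: the whole argument rests on spotting that the mixed products $s^*t$ and $st^*$ are idempotent — hence self-inverse — and on the factorization $st^*=ss^*tt^*$; everything else is bookkeeping about composable pairs, which is guaranteed throughout by the standing assumption that $s$ and $t$ are parallel. (One could also finish by noting that $s^*$ and $t^*$ form another parallel $\sigma$-congruent pair, applying the already-established identity $ss^*t=ts^*s$ to $(s^*,t^*)$ to get $s^*st^*=t^*ts^*$, and then applying the involution $(\cdot)^*$; but the direct route above is shorter.)
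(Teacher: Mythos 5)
Your proof is correct, and it takes a noticeably different route from the paper's. Both arguments pivot on the same use of $E$-unitarity — namely that $\sigma$-congruence of the parallel pair $s,t$ forces the loops $st^*$ and $s^*t$ (equivalently $ts^*$) to be idempotent, via \cref{e_unitary_equiv} — but they diverge after that. You exploit the self-inverseness of idempotents to extract the two ``symmetry'' identities $s^*t=t^*s$ and $st^*=ts^*$, and then finish by a purely equational chain ($ss^*t=ts^*s$, the factorization $st^*=ss^*tt^*$, hence $st^*t=ss^*t=ts^*s$), never touching the natural partial order. The paper instead first computes, without any $E$-unitarity, that $st^*t=(st^*)(ts^*s)$ and $ts^*s=(ts^*)(st^*t)$; only then does it invoke $st^*,ts^*\in E(\Se)$ to read these identities as $st^*t\leqslant ts^*s$ and $ts^*s\leqslant st^*t$, concluding by antisymmetry of $\leqslant$. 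Your version is slightly more elementary in that it avoids the order relation altogether and isolates the reusable commutation-type identities $s^*t=t^*s$ and $st^*=ts^*$; the paper's version has the mild advantage of cleanly separating the order-free bookkeeping from the single point where $E$-unitarity enters. Your handling of composability (everything is justified by parallelism of $s$ and $t$) is adequate as stated.
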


\begin{proof}
Initially, observe that since $(st^*t)^*(st^*t)=(ts^*s)^*(ts^*s)$, it follows that
$$st^*t=(st^*t)(st^*t)^*(st^*t)=(st^*t)(ts^*s)^*(ts^*s)=(st^*)(ts^*s)$$
and
$$ts^*s=(ts^*s)(ts^*s)^*(ts^*s)=(ts^*s)(st^*t)^*(st^*t)=(ts^*)(st^*t).$$
Because $\Se$ is $E$-unitary, we may deduce from \cref{e_unitary_equiv} that $st^*,ts^*\in E(\Se)$. Hence, $st^*t\leqslant ts^*s$ and $ts^*s\leqslant st^*t$, which further implies $st^*t = ts^*s$ as desired.
\end{proof}

We end this section with two examples of inverse semigroupoids. The first one is an adaptation from connected groupoids structures. Given a group $G$ and a non-empty set $X$, there is a well-known strategy to define a groupoid $X\times G\times X$ and every connected groupoid is isomorphic to one groupoid constructed in this way (see Proposition 3.3.6 of \cite{Lawson1998}). We adapt this construction replacing the group $G$ with an inverse semigroup $S$ to obtain an inverse semigroupoid as follows.

\begin{example}
Let $S$ be an inverse semigroup and $A$ a non-empty set. Then, the set $S_A = A \times S \times A$ admits a structure of an inverse semigroupoid, where the object set is given by $S_A^{(0)} = A$. The domain and codomain maps are defined by $d(v,s,u) = u$ and $c(v,s,u) = v$ for every $(v,s,u) \in S_A$.  

The composition is given by $ (w,t,v)(v,s,u) = (w,ts,u) $, for all $(w,t,v), (v,s,u) \in S_A$. The inverse of an element $(v,s,u) \in S_A$ is given by $ (v,s,u)^* = (u,s^*,v) $.  

Furthermore, it is straightforward to verify that an idempotent element has the form $(u,e,u)$ for some $u \in A$ and $e \in E(S)$. The natural partial order in $S_A$ is induced by the natural partial order on $S$, in the sense that $ (v,s,u) \leqslant (v,t,u) $ in $S_A$ if and only if $ s \leqslant t $ in $S$, for any pair of parallel arrows $(v,s,u), (v,t,u) \in S_A$.  

\end{example}

The next example is given by Cordeiro in \cite{Luiz1}. It is a special inverse subsemigroupoid of a particular case of the previous example. To establish notation, we denote by $\ix$ the inverse monoid of partial bijections on $X$, where the product is given by the partial composition of functions.

\begin{example}\label{ancora}
Let $\pi\colon X\to A$ be a surjection, and consider the following subset of $\ix_A$:  
$$\mathcal{J}(\pi)=\{(v,f,u)\mid \dom(f)\subseteq \pi^{-1}(u), \ran(f)\subseteq \pi^{-1}(v)\}.$$  
It is straightforward to verify that $\mathcal{J}(\pi)$ is closed under the product of $\ix_A$ and, therefore, forms a subsemigroupoid of $\ix_A$ with object space $A$. Furthermore, it is easy to see that $(u,f^{-1},v)\in\mathcal{J}(\pi)$ for every $(v,f,u)\in\mathcal{J}(\pi)$, which implies that $\mathcal{J}(\pi)$ is an inverse subsemigroupoid of $\ix_A$.  
\end{example}



\section{Partial Actions of Inverse Semigroupoids}\label{sec3}

This section studies partial actions of inverse semigroupoids. We begin with a set-theoretic framework for partial actions and recall the globalization construction from \cite{demeneghi-tasca-2024}. We then consider ordered partial actions on posets and prove that each such action admits a globalization.

We give the definition of partial morphism of inverse semigroupoids.  
\begin{definition}\label{partial_morphism}
Let $\Se$ and $\Te$ be inverse semigroupoids. A map $\varphi\colon\Se\to\Te$ is said to be a \emph{partial morphism} if the following conditions hold:  
\begin{enumerate}[label=(\roman*)]  
    \item\label{pmi} $\varphi(s^*)=\varphi(s)^*$;  
    \item\label{pmii} If $(s,t)\in \mathcal{S}^{(2)}$, then $(\varphi(s), \varphi(t))\in \Te^{(2)}$ and $\varphi(s)\varphi(t)\leqslant \varphi(st)$;  
    \item\label{pmiii} If $s\leqslant t$ in $\mathcal{S}$, then $\varphi(s)\leqslant \varphi(t)$ in $\Te$.  
\end{enumerate}  
\end{definition}

Any semigroupoid morphism $\varphi\colon\Se\to\Te$ naturally defines a partial morphism. By \cref{pmii} in \cref{partial_morphism}, there exists a unique map $\varphi^{(0)}\colon\Se^{(0)}\to\Te^{(0)}$ such that $(\varphi,\varphi^{(0)})$ forms a morphism of graphs. Thus, similarly to the case of semigroupoid morphisms, we may sometimes refer to $(\varphi,\varphi^{(0)})$ as a partial morphism to emphasize the object map $\varphi^{(0)}$.  


Given a partial morphism $\varphi\colon\Se\to\ix_A$, we denote by $\varphi_s\in\ix$ the second component of $\varphi(s)$ and by $X_s$ the range of $\varphi_s$ for each $s\in\Se$. Notice that, by \cref{pmi} of \cref{partial_morphism}, $\varphi_{s^*}$ is the inverse of $\varphi_s$, and consequently, the domain of $\varphi_s$ is given by $X_{s^*}$.  

\begin{definition}\label{partial_action_def1}
Let $\Se$ be an inverse semigroupoid and $X$ be a set. A \emph{partial action} of $\Se$ on $X$ is a partial morphism $\theta\colon\Se\to \ix_{\Se^{(0)}}$ such that the object map $\theta^{(0)}$ is the identity on $\Se^{(0)}$ and $X=\bigcup_{s\in\Se} X_s$. Furthermore, a partial action $\theta$ is said to be a \emph{global action} if it is a morphism of semigroupoids.  
\end{definition}

The requirement $\bigcup_{s\in\Se} X_s=X$ is not very restrictive since, in its absence, we can replace $X$ with the union of the ranges $X_s$, thereby obtaining an action of $\Se$ on $\bigcup_{s\in\Se} X_s$. Partial actions satisfying this additional requirement are sometimes called \emph{non-degenerate} partial actions. For convenience, we incorporate this condition directly into the definition.  

\begin{remark}\label{partial_action_def_3}
Since the object map $\theta^{(0)}$ is the identity on $\Se^{(0)}$, for any $s\in\Se$ we have $\theta(s)=(c(s),\theta_s,d(s))$, where $\theta_s\colon X_{s^*}\to X_s$. Hence, a partial action $\theta\colon\Se\to \ix_{\Se^{(0)}}$ induces a pair $\big(\{X_s\}_{s \in \Se}, \{\theta_s\}_{s \in \Se}\big)$ consisting of a collection $\{X_s\}_{s \in \Se}$ of subsets of $X$ and a collection $\{\theta_s\}_{s \in \Se}$ of maps $\theta_s \colon X_{s^*} \to X_s$ satisfying the following conditions:  
\begin{enumerate}[label=(E\arabic*)]  
    \item\label{e1} For every $s\in\mathcal{S}$, $\theta_s$ is a bijection and satisfies $\theta_s^{-1}=\theta_{s^*}$. Moreover, $X=\bigcup_{s\in\Se} X_s$;  
    \item\label{e2} $\theta_s\circ\theta_t \subseteq \theta_{st}$ for every $s, t \in \Se$ such that $(s,t)\in\mathcal{S}^{(2)}$;  
    \item\label{e3} $X_s \subseteq X_t$ for every $s,t \in \Se$ such that $s\leqslant t$.  
\end{enumerate}  
Furthermore, if $\theta$ is a global action, then  
\begin{enumerate}[label=(E\arabic*)]\setcounter{enumi}{3}  
    \item\label{eglob} $\theta_{st}=\theta_s\circ\theta_t$ for every $s, t \in \Se$ such that $(s,t)\in\mathcal{S}^{(2)}$ (i.e., equality holds in \ref{e2}).  
\end{enumerate}
On the other hand, given any pair $\big(\{X_s\}_{s \in \Se}, \{\theta_s\}_{s \in \Se}\big)$, such that $\{X_s\}_{s \in \Se}$ is a collection of subsets of $X$ and $\{\theta_s\}_{s \in \Se}$ is a collection of maps $\theta_s \colon X_{s^*} \to X_s$ satisfying conditions \ref{e1}, \ref{e2}, and \ref{e3},  induces a partial action $\theta$ of $\Se$ on $X$, given by $\theta(s)=(c(s),\theta_s,d(s))$. Moreover, this action is global if the initial pair also satisfies \ref{eglob}. Accordingly, we often regard the pair $\big(\{X_s\}_{s \in \Se}, \{\theta_s\}_{s \in \Se}\big)$ as the partial action $\theta\colon\Se\to \ix_{\Se^{(0)}}$ of $\Se$ on $X$.
\end{remark}

In Definition 2.4 of \cite{demeneghi-tasca-2024}, Demeneghi and Tasca introduced partial actions of inverse semigroupoids on sets as a pair $\big(\{X_s\}_{s \in \Se}, \{\theta_s\}_{s \in \Se}\big)$ satisfying a list of axioms different from that presented in \cref{partial_action_def_3}. However, they proved (Proposition 2.10 of \cite{demeneghi-tasca-2024}) that the two lists of axioms yield equivalent notions of partial action. For convenience, we explicitly state this equivalence in the next proposition.

\begin{proposition}\label{partial_action_def2}
Let $X$ be a set, $\Se$ be an inverse semigroupoid, $\{X_s\}_{s\in\Se}$ be a collection of subsets of $X$ and $\{\theta_s\}_{s\in\Se}$ a collection of maps $\theta_s\colon X_{s^*}\to X_s$. The pair $\theta=\big(\{X_s\}_{s\in\Se}, \{\theta_s\}_{s\in\Se}\big)$ is a partial action of $\Se$ on $X$ if and only if it satisfies  
\begin{enumerate}[label=(P\arabic*)]  
    \item\label{p1} $\theta_e=\id_{X_e}$ for all $e \in E(\Se)$. Moreover, for all $x \in X$, there exists $e \in E(\Se)$ such that $x \in X_e$;  
    \item\label{p2} $X_s \subseteq X_{ss^*}$ for every $s \in \Se$;  
    \item\label{p3} $\theta_t^{-1}(X_t \cap X_{s^*})=X_{(st)^*}\cap X_{t^*}$ for all $(s,t)\in\mathcal{S}^{(2)}$ and, moreover, $\theta_s\big(\theta_t(x)\big)=\theta_{st}(x)$ for all $x \in X_{(st)^*}\cap X_{t^*}$.  
\end{enumerate}  
Furthermore, if $\theta$ is a partial action of $\Se$ on $X$, then it is a global action if and only if 
\begin{enumerate}[label=(P\arabic*)]\setcounter{enumi}{3}  
    \item\label{pglob} $X_s = X_{ss^*}$ for every $s \in \Se$ (i.e., equality holds in \ref{p2}).  
\end{enumerate}  
\end{proposition}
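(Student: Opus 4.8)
The plan is to reduce the claim to an equivalence of two axiom lists and verify it directly. By \cref{partial_action_def_3}, a pair $\theta=\big(\{X_s\}_{s\in\Se},\{\theta_s\}_{s\in\Se}\big)$ of the stated shape is a partial action of $\Se$ on $X$ exactly when it satisfies \ref{e1}--\ref{e3}, and it is a global action exactly when it moreover satisfies \ref{eglob}. So it suffices to show that \ref{e1}--\ref{e3} hold if and only if \ref{p1}--\ref{p3} hold, and that, granting these, \ref{eglob} holds if and only if \ref{pglob} holds. (This is essentially Proposition 2.10 of \cite{demeneghi-tasca-2024}; I would include the argument for completeness.)

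\textbf{From \ref{e1}--\ref{e3} to \ref{p1}--\ref{p3}.} I would first extract \ref{p2} and \ref{p1}. For any $s\in\Se$, bijectivity in \ref{e1} gives $\theta_{s^*}\circ\theta_s=\id_{X_{s^*}}$, while $(s^*,s)\in\Se^{(2)}$ with $s^*s\in E(\Se)$, so \ref{e2} yields $\id_{X_{s^*}}\subseteq\theta_{s^*s}$; comparing domains gives $X_{s^*}\subseteq X_{s^*s}$, and replacing $s$ by $s^*$ gives \ref{p2}. Taking $s=e\in E(\Se)$ in the same computation gives $\id_{X_e}\subseteq\theta_{ee}=\theta_e$, and since $\theta_e$ is a total self-map of $X_e$ this forces $\theta_e=\id_{X_e}$; the remaining clause of \ref{p1} follows from $X=\bigcup_s X_s$ and \ref{p2} (using $ss^*\in E(\Se)$). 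For \ref{p3}, fix $(s,t)\in\Se^{(2)}$. The inclusion $\theta_t^{-1}(X_t\cap X_{s^*})\subseteq X_{(st)^*}\cap X_{t^*}$, together with the cocycle identity on that set, is immediate from \ref{e2}: if $\theta_t(z)\in X_{s^*}=\dom\theta_s$ then $\theta_s\theta_t(z)$ is defined, whence $z\in\dom\theta_{st}=X_{(st)^*}$ and $\theta_{st}(z)=\theta_s\theta_t(z)$. The reverse inclusion is the crux. Given $z\in X_{(st)^*}\cap X_{t^*}$, write $z=\theta_{(st)^*}(p)$ with $p\in X_{st}$; since $(t,(st)^*)\in\Se^{(2)}$ with $t\,(st)^*=tt^*s^*$ and $\theta_{(st)^*}(p)=z\in\dom\theta_t$, \ref{e2} applied to this pair at $p$ gives $\theta_t(z)=\theta_{tt^*s^*}(p)\in X_{tt^*s^*}$. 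As $tt^*s^*=(tt^*)s^*\leqslant s^*$, \ref{e3} gives $X_{tt^*s^*}\subseteq X_{s^*}$, so $\theta_t(z)\in X_t\cap X_{s^*}$ and $z\in\theta_t^{-1}(X_t\cap X_{s^*})$.

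\textbf{From \ref{p1}--\ref{p3} to \ref{e1}--\ref{e3} and the global case.} For \ref{e1}: applying \ref{p3} to the composable pairs $(s,s^*)$ and $(s^*,s)$ and using $\theta_{ss^*}=\id_{X_{ss^*}}$, $\theta_{s^*s}=\id_{X_{s^*s}}$ from \ref{p1} together with \ref{p2}, one obtains $\theta_s\circ\theta_{s^*}=\id_{X_s}$ and $\theta_{s^*}\circ\theta_s=\id_{X_{s^*}}$, so $\theta_s$ is a bijection with $\theta_s^{-1}=\theta_{s^*}$; the covering condition is the second clause of \ref{p1}. Condition \ref{e2} is precisely the conjunction of the two parts of \ref{p3}, since $\theta_t^{-1}(X_t\cap X_{s^*})=\dom(\theta_s\circ\theta_t)$. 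For \ref{e3}: if $s\leqslant t$ then $s=t(s^*s)$ with $s^*s\in E(\Se)$ and $(t,s^*s)\in\Se^{(2)}$, so \ref{p3} applied to $(t,s^*s)$, together with $\theta_{s^*s}=\id$, gives $X_{s^*s}\cap X_{t^*}=X_{s^*}\cap X_{s^*s}$; by \ref{p2} we have $X_{s^*}\subseteq X_{s^*s}$, hence $X_{s^*}=X_{s^*s}\cap X_{t^*}\subseteq X_{t^*}$, and replacing $s,t$ by $s^*,t^*$ gives $X_s\subseteq X_t$. Finally, assuming \ref{e1}--\ref{e3}: if \ref{eglob} holds then $\theta_s\circ\theta_{s^*}=\theta_{ss^*}$, and since the two sides are $\id_{X_s}$ and $\id_{X_{ss^*}}$ we get $X_s=X_{ss^*}$, i.e. \ref{pglob}; conversely, if \ref{pglob} holds then $X_{(st)^*}=X_{(st)^*(st)}\subseteq X_{t^*t}=X_{t^*}$ using $(st)^*(st)\leqslant t^*t$ with \ref{e3} and \ref{pglob}, so $X_{(st)^*}\cap X_{t^*}=X_{(st)^*}=\dom\theta_{st}$ and the inclusion $\theta_s\circ\theta_t\subseteq\theta_{st}$ of \ref{e2} becomes an equality, which is \ref{eglob}.

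The step I expect to be the main obstacle is the reverse inclusion in \ref{p3} in the first direction: the containment $\theta_s\circ\theta_t\subseteq\theta_{st}$ constrains domains only one way, and recovering that $\dom(\theta_s\circ\theta_t)$ exhausts $X_{(st)^*}\cap X_{t^*}$ relies on the specific factorization $t\,(st)^*=tt^*s^*$ and on the inequality $tt^*s^*\leqslant s^*$, which is what lets \ref{e3} enter. All the remaining points are bookkeeping with the basic inverse-semigroupoid identities, or can be cited from \cite{demeneghi-tasca-2024}.
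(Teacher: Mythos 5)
Your proposal is correct, and it follows the only reasonable route: reduce via \cref{partial_action_def_3} to the equivalence of the axiom lists \ref{e1}--\ref{e3} and \ref{p1}--\ref{p3}. The paper itself gives no proof of this proposition --- it is quoted verbatim from Proposition 2.10 of \cite{demeneghi-tasca-2024} --- so there is nothing internal to compare against; your argument, including the key step recovering the reverse inclusion in \ref{p3} from the factorization $t(st)^*=tt^*s^*$ and the inequality $tt^*s^*\leqslant s^*$ via \ref{e3}, supplies exactly the details the paper delegates to that reference.
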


In contexts where one must display the set $X$, we represent the partial action $\theta$ of $\Se$ on $X$ by the ordered pair $(\theta,X)$.

The following definition introduces the natural notion of equivalence for partial actions.

\begin{definition}\label{def:equivariant_map}
Let $\theta^X=(\{X_s\}_{s\in\Se}, \{\theta_s^X\}_{s\in\Se})$ and $\theta^Y=(\{Y_s\}_{s\in\Se}, \{\theta_s^Y\}_{s\in\Se})$ be partial actions of an inverse semigroupoid $\Se$ on sets $X$ and $Y$, respectively. A function $\varphi\colon X\rightarrow Y$ is said to be $\Se$\emph{-equivariant} if $\varphi(X_s)\subseteq Y_s$ for every $s\in\Se$ and, moreover, $\varphi\big(\theta_s^X(x)\big)=\theta_s^Y\big(\varphi(x)\big)$ for every $x \in X_{s^*}$. If moreover $\varphi$ is bijective and $\varphi^{-1}$ is also $\Se$-equivariant, we will say that $\varphi$ is an \emph{equivalence of partial actions}.
\end{definition}

Before turning to ordered partial actions of inverse semigroupoids on posets and their globalizations, we lay the groundwork with the set‑theoretic approach to globalization, starting with the definition of an orbit. 
If $\theta$ is a partial action of an inverse semigroupoid $\Se$ on a set $X$, and $Y\subseteq X$, we define the orbit of $Y$ under $\theta$ by
$$\orb(Y)=\bigcup_{s\in\Se} \theta_s(Y \cap X_{s^*}).$$

\begin{definition}\label{def:globalization}
Let $\theta$ be a partial action of an inverse semigroupoid $\Se$ on a set $X$. A pair $(\eta,\varphi)$ is said to be a \emph{globalization} of $\theta$ if $\eta$ is a global action of $\Se$ on a set $E$ and $\varphi\colon X \to E$ is an injective $\Se$-equivariant map such that
\begin{enumerate}[label=(\roman*)]
\item $\varphi$ induces an equivalence between $\theta$ and the restriction of $\eta$ to $\varphi(X)$, and
\item the orbit of $\varphi(X)$ under $\eta$ coincides with $E$.
\end{enumerate}
\end{definition}

To emphasize the underlying set $E$, we frequently denote a globalization of $(\theta,X)$ by the triple $(\eta,E,\varphi)$ rather than by $(\eta,\varphi)$.

Demeneghi and Tasca showed that every partial action of an inverse semigroupoid on a set admits a globalization. We summarize their set‑theoretic construction in the following remark; see \cite{demeneghi-tasca-2024} for full details.

\begin{remark}\label{remark:globalization}
Let $\theta=\big(\{X_s\}_{s\in\Se}, \{\theta_s\}_{s\in\Se}\big)$ be a partial action of an inverse semigroupoid $\Se$ on a set $X$. We briefly recall the construction of a globalization $(\eta,E,i)$ for $(\theta,X)$ due to Demeneghi and Tasca.

As a set, $E$ is the quotient of $D:=\{(s,x)\in \Se\times X \mid x \in X_{s^*s}\}$ by the equivalence relation $\approx$ generated by the reflexive and symmetric relation $\sim$ defined on $D$ which identifies pairs $(s,x)$ and $(t,y)$ in $D$ if either
\begin{enumerate}[label=(R\arabic*)]
    \item\label{r1} $(t^*,s)\in\Se^{(2)}$, $x \in X_{s^*t}$ and $\theta_{t^*s}(x)=y$; or
    \item\label{r2} $s, t \in E(\Se)$ and $x=y$.
\end{enumerate}

Sometimes we will write $(s,x)\simrum (t,y)$ and $(s,x)\simrdois (t,y)$ to specify by which item above we have $(s,x)\sim (t,y)$. We also denote by $[s,x]$ the equivalence class of an element $(s,x)\in D$.

For each $s\in\Se$, the set $E_s$ is defined as the image of the set $D_s=\{(p,x)\in D \mid (s^*,p)\in\Se^{(2)} \text{ and } x \in X_{p^*ss^*p}\}$ under the equivalence relation $\approx$ and the map $\eta_s\colon E_{s^*}\to E_s$ is defined by $\eta_s([p,x])=[sp,x]$ if $(p,x)\in D_{s^*}$. Finally, the $\Se$-equivariant map $i\colon X \to E$ is given by $i(x)=[e,x]$ in which $e\in E(\Se)$ is any idempotent such that $x\in X_e$.
\end{remark}

We now focus on ordered partial actions of inverse semigroupoids on partially ordered sets and on constructing their globalizations.

\begin{definition}\label{def:ordered_partial_action}
Let $\Se$ be an inverse semigroupoid and $X$ be a partially ordered set. A (set theoretical) partial action $\theta=\big(\{X_s\}_{s\in\Se}, \{\theta_s\}_{s\in\Se}\big)$ of $\Se$ on the underlying set $X$ is said to be an ordered partial action of $\Se$ on $X$ if each $X_s$ is an order ideal of $X$ and each $\theta_s$ is an order isomorphism.
\end{definition}

Henceforth, we abbreviate partially ordered set to poset. 
Notice that in terms of partial morphisms, an ordered partial action of an inverse semigroupoid $\Se$ on a poset $X$ is a partial action $\theta\colon\Se\to \ix^{\text{o}}_{\Se^{(0)}}$, in which $\ix^{\text{o}}$ denotes the inverse monoid consisting of order isomorphisms between order ideals of $X$. Accordingly, the expression ``partial action by order isomorphisms'' is often used to refer to ordered partial actions.

When no ambiguity arises, we drop “ordered” and simply speak of a partial action on a poset. In this section, however, we retain “ordered” to distinguish these from the underlying set-theoretic partial actions.

We introduce now the natural notion of order equivalence for ordered partial actions.

\begin{definition}\label{def:ordered_equivariant_map}
Let $\theta^X=(\{X_s\}_{s\in\Se}, \{\theta_s^X\}_{s\in\Se})$ and $\theta^Y=(\{Y_s\}_{s\in\Se}, \{\theta_s^Y\}_{s\in\Se})$ be ordered partial actions of an inverse semigroupoid $\Se$ on posets $X$ and $Y$, respectively. A function $\varphi\colon X\rightarrow Y$ is said to be \emph{ordered} $\Se$\emph{-equivariant} if it is $\Se$-equivariant in the sense of \cref{def:equivariant_map} and is order-preserving. If moreover $\varphi$ is bijective and $\varphi^{-1}$ is also ordered $\Se$-equivariant, we will say that $\varphi$ is an \emph{ordered equivalence of partial actions}.
\end{definition}

Let $X$ be a poset and let $\theta$ be an ordered partial action of the inverse semigroupoid $\Se$ on $X$. Denote by $(\eta,i)$ the set-theoretic globalization of $\theta$ constructed in Remark \ref{remark:globalization}. Our goal is to show that the globalization domain $E$ inherits a natural partial order from $X$ and that, with respect to this order, each $\eta_s$ is an order isomorphism. We begin with an auxiliary lemma that verifies how the original order on $X$ interacts with the equivalence relation defining $E$.

\begin{lemma}\label{lemma:tec}
Let $\Se$ be an inverse semigroupoid, $X$ be a partially ordered set and $\theta$ be a set-theoretical partial action of $\Se$ on the underlying set $X$. With the notation used in \cref{remark:globalization}, we have:
\begin{enumerate}[label=(\roman*),ref=(\roman*)]
    \item\label{lemma:tec:item1} Let $(s,x), (s,y) \in D$. If $(s,x)\approx(s,y)$, then $x=y$.
    \item\label{lemma:tec:item2} Let $(s,x), (t,y) \in D$ and $x'\in X$. If $(s,x)\approx (t,y)$ and $x'\leqslant x$, then there exists $y'\leqslant y$ such that $(s,x')\approx (t,y')$.
    \item\label{lemma:tec:item3} Let $(s,x), (t,y) \in D$. If there exists $(r,y')\in D$ and $x'\leqslant y'$ such that $(r,x')\approx (s,x)$ and $(r,y')\approx (t,y)$, then for all $(p,z)\in D$ such that $(p,z)\approx (t,y)$, there exists $z'\leqslant z$ such that $(p,z')\approx (s,x)$.
\end{enumerate}
\end{lemma}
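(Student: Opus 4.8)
The strategy is to work entirely with the generating relation $\sim$ and bootstrap to $\approx$ via induction on the length of a $\sim$-chain, exploiting that $\sim$ is reflexive and symmetric. For \ref{lemma:tec:item1}, I would first prove the statement for a single step $(s,x)\sim(s,y)$ by case analysis on \ref{r1}/\ref{r2}: the \ref{r2} case is immediate, and for \ref{r1} with $t=s$ one gets $s^*s$ composable with $s$ and $\theta_{s^*s}(x)=y$; since $\theta_{s^*s}=\id_{X_{s^*s}}$ (using \ref{p1} of \cref{partial_action_def2}, as $s^*s\in E(\Se)$) and $x\in X_{s^*s}$, we conclude $x=y$. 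Wait — this needs care: a general $\sim$-chain from $(s,x)$ to $(s,y)$ may pass through pairs with first coordinate $\neq s$, so the single-step argument alone is insufficient. The clean route is instead to use that the set-theoretic map $i$ is injective (\cref{remark:globalization}) or, more directly, to recall from \cite{demeneghi-tasca-2024} that $[s,x]=[s,y]$ forces $x=y$ — this is essentially the well-definedness/injectivity statement underpinning that $E_s\to E_s$ is a bijection. I would cite or reprove this as the base fact.

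For \ref{lemma:tec:item2}, I would again reduce to a single $\sim$-step and then induct. If $(s,x)\simrdois(t,y)$ then $s,t\in E(\Se)$, $x=y$, and we take $y'=x'$. If $(s,x)\simrum(t,y)$, then $(t^*,s)\in\Se^{(2)}$, $x\in X_{s^*t}$ and $\theta_{t^*s}(x)=y$; since $X_{s^*t}$ is an order ideal (here I use that $\theta$ is an \emph{ordered} partial action), $x'\leqslant x$ gives $x'\in X_{s^*t}$, so $y':=\theta_{t^*s}(x')$ is defined, satisfies $(s,x')\simrum(t,y')$, and $y'\leqslant y$ because $\theta_{t^*s}$ is order-preserving. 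Composing these single-step "lifts" along a chain from $(s,x)$ to $(t,y)$ — each step producing a smaller element over the next pair, using \ref{lemma:tec:item1} at the end to ensure consistency if the chain returns to the same first coordinate — yields the desired $y'\leqslant y$ with $(s,x')\approx(t,y')$. The monotonicity of this "lifting" (the produced $y'$ depends monotonically on $x'$) is what makes the chain composition coherent.

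For \ref{lemma:tec:item3}: given $(r,y')\approx(t,y)$ and $(p,z)\approx(t,y)$, transitivity gives $(r,y')\approx(p,z)$; by \ref{lemma:tec:item2} applied with $x'=x'\leqslant y'$, there exists $z'\leqslant z$ with $(r,x')\approx(p,z')$; then $(p,z')\approx(r,x')\approx(s,x)$ by transitivity with the hypothesis $(r,x')\approx(s,x)$. So \ref{lemma:tec:item3} is a short formal consequence of \ref{lemma:tec:item2} once \ref{lemma:tec:item2} is in hand.

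The main obstacle is \ref{lemma:tec:item1} combined with the chain-induction in \ref{lemma:tec:item2}: a naive single-step argument does not see the whole $\approx$-class, so one must either invoke the injectivity of $i$ from \cite{demeneghi-tasca-2024} as a black box or carefully track that the "lift" of $x'$ along a $\sim$-chain is well-defined independent of the chosen chain — and the latter is exactly what \ref{lemma:tec:item1} is for, creating a mild interdependence that I would resolve by proving \ref{lemma:tec:item1} first via the known injectivity of the set-theoretic globalization map, then using it freely in \ref{lemma:tec:item2} and \ref{lemma:tec:item3}.
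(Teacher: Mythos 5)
Your proposal is correct and follows essentially the same route as the paper: item (i) is settled by appealing to the reduction of $\approx$ to a single \ref{r1}-step from \cite{demeneghi-tasca-2024} (the paper cites its Lemma 3.13, which is the same black box you invoke via injectivity of the set-theoretic globalization), item (ii) by the same single-step case analysis on \ref{r1}/\ref{r2} followed by induction along a $\sim$-chain, and item (iii) as the same formal consequence of (ii) plus transitivity. The only difference is that your worry about needing (i) to keep the chain-lifting in (ii) consistent is unnecessary — the induction goes through directly — but this does not affect correctness.
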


\begin{proof}
To prove \cref{lemma:tec:item1}, let $(s,x), (s,y)\in D$ such that $(s,x)\approx (t,y)$. By Lemma 3.13 of \cite{demeneghi-tasca-2024}, we have $(s,x)\simrum (t,y)$. This amounts to say that $y=\theta_{s^*s}(x)=x$.

To prove \cref{lemma:tec:item2}, suppose initially that $(s,x)\sim (t,y)$. If $(s,x)\simrum (t,y)$, then $(t^*,s)\in\Se^{(2)}$, $x\in X_{s^*t}$ and $\theta_{t^*s}(x)=y$. Since $x' \leqslant x$, we have $x'\in X_{s^*t}$. Setting $y'=\theta_{t^*s}(x')$, we have $y'\leqslant y$ because $\theta_{t^*s}$ is order preserving and, moreover we have $(s,x')\simrum (t,y')$. Otherwise, if $(s,x)\simrdois (t,y)$, then $x=y$. So, we may just choose $y'=x'$ to obtain $(s,x')\simrdois (t,y')$. The general case $(s,x)\approx (t,y)$ now follows by a simple induction argument.

Finally, to prove \cref{lemma:tec:item3}, let $(p,z)\in D$ such that $(p,z)\approx (t,y)$. By hypothesis we have $(r,y')\approx (p,z)$ and $x'\leqslant y'$ and so we may deduce by \cref{lemma:tec:item2} that there exists $z'\leqslant z$ such that $(r,x') \approx (p,z')$. Thus, we have $(p,z') \approx (s,x)$ as desired.
\end{proof}

It follows that the order on $X$ induces a partial order on $E$, with respect to which the global action $\eta$ is order‐preserving.

\begin{proposition}\label{prop:induced_poset}
Let $\Se$ be an inverse semigroupoid, $X$ a partially ordered set, and $\theta$ a set-theoretic partial action of $\Se$ on the underlying set $X$. If $(\eta,E,i)$ is the globalization of $\theta$ described in \cref{remark:globalization}, then $E$ admits a partial order defined by declaring $[s,x]\leqslant [t,y]$ if and only if there exist $(r,y')\in D$ and $x'\in X$ such that $(r,y')\approx (t,y)$, $x'\leqslant y'$ and $(r,x')\approx (s,x)$. Moreover, with this order, the action $\eta$ of $\Se$ on $E$ is by order isomorphisms.
\end{proposition}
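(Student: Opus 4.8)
The plan is to verify, in order: (1) that $\leqslant$ on $E$ is well defined, i.e.\ independent of the representatives $(s,x)$ and $(t,y)$ chosen for the two classes; (2) that it is a partial order (reflexive, antisymmetric, transitive); and (3) that each $\eta_s\colon E_{s^*}\to E_s$ is an order isomorphism. Steps (1) and (2) will rest entirely on the technical content packaged in \cref{lemma:tec}; step (3) is then a short computation using the explicit formula $\eta_s([p,x])=[sp,x]$.

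For well-definedness, suppose $[s,x]=[s_1,x_1]$ and $[t,y]=[t_1,y_1]$ and that $[s,x]\leqslant[t,y]$ is witnessed by $(r,y')\in D$ and $x'\leqslant y'$ with $(r,y')\approx(t,y)$ and $(r,x')\approx(s,x)$. Since $(t,y)\approx(t_1,y_1)$ we have $(r,y')\approx(t_1,y_1)$, and since $(r,x')\approx(s,x)\approx(s_1,x_1)$ the same pair $(r,y'),\,x'$ witnesses $[s_1,x_1]\leqslant[t_1,y_1]$; so no argument is actually needed beyond transitivity of $\approx$. (The role of \cref{lemma:tec:item3} is not well-definedness but the more delicate comparison arguments below.) Reflexivity is immediate by taking $r=s$, $x'=y'=x$. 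For antisymmetry, assume $[s,x]\leqslant[t,y]$ and $[t,y]\leqslant[s,x]$: the first gives $(r,y')\in D$ with $x'\leqslant y'$, $(r,y')\approx(t,y)$, $(r,x')\approx(s,x)$. Applying \cref{lemma:tec:item3} with the pair $(p,z)$ taken to be $(t,y)$ itself (legitimately, since $(t,y)\approx(t,y)$) is not yet enough; instead I would use the second inequality to produce, via \cref{lemma:tec:item3}, an element $z'\leqslant x'$ with $(r,z')\approx(t,y')$-type data, chase it back through \cref{lemma:tec:item2} to land in a common representative, and invoke \cref{lemma:tec:item1} to force $z'=y'$; combined with $x'\leqslant y'\leqslant x'$ in $X$ this yields $x'=y'$ and hence $[s,x]=[r,x']=[r,y']=[t,y]$. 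For transitivity, given $[s,x]\leqslant[t,y]$ and $[t,y]\leqslant[u,z]$, first use \cref{lemma:tec:item3} to transport the witnessing data for the first inequality along the $\approx$-class of $[t,y]$ so that both inequalities are expressed relative to a \emph{single} representative of $[t,y]$; then the two ``$x'\leqslant y'$'' and ``$y''\leqslant z'$'' relations (after a further application of \cref{lemma:tec:item2} to line up $y'$ with $y''$) compose inside $X$ by transitivity there, producing the required common representative $(r,\cdot)$.

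For the last assertion, fix $s\in\Se$. First note $\eta_s$ is order preserving: if $[p,x]\leqslant[q,y]$ in $E_{s^*}$, witnessed by $(r,y')$, $x'\leqslant y'$ with $(r,y')\approx(q,y)$ and $(r,x')\approx(p,x)$, then applying $\eta_s$ amounts to left-multiplying the first coordinate by $s$; one checks $(sr,y')\approx(sq,y)$ and $(sr,x')\approx(sp,x)$ directly from relation \ref{r1} (since left-multiplying $(t^*,p')\in\Se^{(2)}$ by $s$ preserves the relevant composability and $\theta$-identities, using \ref{e2}), so the same $x'\leqslant y'$ witnesses $\eta_s([p,x])=[sp,x]\leqslant[sq,y]=\eta_s([q,y])$. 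Since $\eta_{s^*}=\eta_s^{-1}$ as set maps (from \ref{e1}) and $\eta_{s^*}$ is likewise order preserving by the same argument, $\eta_s$ is an order isomorphism onto $E_s$.

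The main obstacle is antisymmetry (and, to a lesser extent, transitivity): the relation $\leqslant$ on $E$ is defined by an existential over representatives, so proving it is sharp requires pulling an inequality back to a \emph{common} representative where \cref{lemma:tec:item1} can bite. \cref{lemma:tec:item3} is precisely the tool for moving witnessing data across an $\approx$-class while controlling the order, so the real work is the careful bookkeeping of which representative each of \cref{lemma:tec:item2,lemma:tec:item3,lemma:tec:item1} is applied to; none of it is computationally heavy, but it is where a sloppy argument would break.
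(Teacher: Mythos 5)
Your treatment of the order relation itself (well-definedness being automatic, reflexivity, antisymmetry, transitivity) follows essentially the same route as the paper: everything is reduced to \cref{lemma:tec:item3} and \cref{lemma:tec:item1}, and your antisymmetry sketch, once the ``$(r,z')\approx(t,y')$-type data'' is pinned down to $(r,z')\approx(t,y)$ with $z'\leqslant x'$, closes correctly via $z'=y'$ and $x'\leqslant y'\leqslant x'$. The detour through \cref{lemma:tec:item2} you mention there is unnecessary, but harmless.

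The genuine gap is in the last step, where you argue that $\eta_s$ is order-preserving. You take the witness $(r,y')$, $(r,x')$ with $x'\leqslant y'$ provided by the definition of $[p,x]\leqslant[q,y]$ and then ``apply $\eta_s$ by left-multiplying the first coordinate by $s$.'' But the formula $\eta_s([r,\cdot])=[sr,\cdot]$ is only available for representatives lying in $D_{s^*}$, i.e.\ with $(s,r)\in\Se^{(2)}$ and $y'\in X_{r^*s^*sr}$; the witness of the order relation is an arbitrary representative of the class and need not satisfy either condition (a class in $E_{s^*}$ has \emph{some} representative in $D_{s^*}$, not all of them — via \ref{r2} a class can even have representatives whose first coordinates are non-composable with $s$). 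Moreover, even after fixing composability, checking $(sr,y')\approx(sq,y)$ ``directly from \ref{r1}'' requires $y'\in X_{r^*s^*sq}$, which is strictly stronger than the membership $y'\in X_{r^*q}$ that \ref{r1} gives you. The paper's proof avoids this by reversing the order of operations: first choose a representative $(r,z)$ of the \emph{larger} class $[q,y]$ that lies in $D_{s^*}$, then use \cref{lemma:tec:item3} to produce $z'\leqslant z$ with $(r,z')\approx(p,x)$, and observe that $z'\in X_{r^*s^*sr}$ because that domain is an order ideal (this is where the ordered hypothesis on $\theta$ enters); only then is $\eta_s$ computed on both classes with the common first coordinate $r$. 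You should restructure your argument this way. Separately, you never verify that each $E_s$ is an order ideal of $E$; this is part of what ``$\eta$ is an ordered global action'' means per \cref{def:ordered_partial_action}, is needed downstream in \cref{o_teorema_1}, and is proved in the paper by the same representative-transport technique.
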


\begin{proof}
First, observe that \cref{lemma:tec:item3} of \cref{lemma:tec} can be interpreted as follows: if $[s,x]\leqslant [t,y]$, then for every representative $(p,z)$ of $[t,y]$, there exists $z'\leqslant z$ such that $(p,z')$ is a representative of $[s,x]$.

We now show that the given relation defines a partial order on $E$. Reflexivity is immediate from the definition of the relation. 

To prove antisymmetry, let $(s,x),(t,y) \in D$ such that $[s,x]\leqslant [t,y]$ and $[t,y]\leqslant [s,x]$. By \cref{lemma:tec:item3} of \cref{lemma:tec}, the first inequality ensures the existence of $y'\in X$ such that $y'\leqslant y$ and $(t,y')\approx (s,x)$. Similarly, applying \cref{lemma:tec:item3} again to the second inequality, there exists $y''\in X$ such that $y''\leqslant y'$ and $(t,y'')\approx (t,y)$. By \cref{lemma:tec:item1} of \cref{lemma:tec}, we have $y=y''$. Since $y''\leqslant y' \leqslant y$, it follows that $y=y'$, and thus $(t,y)\approx (s,x)$.

For the first part, it remains to prove transitivity. Let $(s,x), (t,y), (r,z)\in D$ such that $[s,x]\leqslant [t,y]\leqslant [r,z]$. By \cref{lemma:tec:item3} of \cref{lemma:tec}, there exists $z'\in X$ such that $z'\leqslant z$ and $(r,z')\approx (t,y)$. Since $[s,x]\leqslant [r,z']$, we may apply \cref{lemma:tec:item3} once more to obtain $z''\in X$ such that $z''\leqslant z'$ and $(r,z'')\approx (s,x)$. This shows that $[s,x]\leqslant [r,z]$, as desired.

It remains to show that $\eta$ acts on $E$ by order isomorphisms. We begin by proving that, for every $s \in \Se$, the set $E_s$ is an order ideal of $E$. Let $[p,x], [q,y] \in E$ with $[p,x] \in E_s$ and $[q,y] \leqslant [p,x]$. By the definition of $E_s$ (see \cref{remark:globalization}), the assumption $[p,x] \in E_s$ implies the existence of $(r,z) \in D$ satisfying $(p,x) \approx (r,z)$, $(s^*,r) \in \Se^{(2)}$, and $z \in X_{r^*ss^*r}$. Since $[q,y] \leqslant [r,z]$, it follows from \cref{lemma:tec:item3} of \cref{lemma:tec} that there exists $z' \leqslant z$ such that $(q,y) \approx (r,z')$. In summary, we have $(r,z')\in D$ such that $(q,y)\approx (r,z')$, $(s^*,r)\in\Se^{(2)}$ and $z'\in X_{r^*ss^*r}$, which means that $[q,y]\in E_s$ as desired.

Finally, we show that $\eta_s$ is order-preserving for every $s \in \Se$. Let $[p,x], [q,y] \in E_{s^*}$ with $[p,x] \leqslant [q,y]$. Since $[q,y] \in E_{s^*}$, there exists $(r,z) \in D$ such that $(r,z) \approx (q,y)$, $(s,r) \in \Se^{(2)}$, and $z \in X_{r^*s^*sr}$. As $[p,x] \leqslant [r,z]$, \cref{lemma:tec:item3} of \cref{lemma:tec} yields $z' \leqslant z$ such that $(p,x) \approx (r,z')$. Since $z' \in X_{r^*s^*sr}$, we may use $(r,z')$ as a representative of $[p,x]$ in $D_s$, and therefore,
$$\eta_s([p,x]) = \eta_s([r,z']) = [sr,z'] \leqslant [sr,z] = \eta_s([r,z]) = \eta_s([q,y]),$$
which shows that $\eta_s$ is order-preserving, completing the proof.
\end{proof}

Having established the key lemmas, we are ready to introduce the notion of an ordered globalization and to prove that each ordered partial action admits one.

\begin{definition}\label{def:ordered_globalization}
Let $\theta$ be an ordered partial action of an inverse semigroupoid $\Se$ on a partially ordered set $X$. A pair $(\eta,\varphi)$ is said to be an \emph{ordered globalization} of $\theta$ if $\eta$ is an ordered global action of $\Se$ on a partially ordered set $E$ and $\varphi\colon X \to E$ is an injective ordered $\Se$-equivariant map such that
\begin{enumerate}[label=(\roman*)]
\item $\varphi$ induces an ordered equivalence between $\theta$ and the restriction of $\eta$ to $\varphi(X)$, and
\item $\varphi(X)$ is an order ideal of $E$ and the orbit of $\varphi(X)$ under $\eta$ coincides with $E$.
\end{enumerate}
\end{definition}

We are now in a position to prove that every ordered partial action admits such a globalization.

\begin{theorem}\label{o_teorema_1}
Let $\Se$ be an inverse semigroupoid. Then, every ordered partial action $\theta$ of $\Se$ on a partially ordered set $X$ admits an ordered globalization.
\end{theorem}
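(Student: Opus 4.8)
The plan is to take the set-theoretic globalization $(\eta, E, i)$ from \cref{remark:globalization} and equip $E$ with the partial order provided by \cref{prop:induced_poset}. That proposition already does the bulk of the work: it shows $\leqslant$ is a genuine partial order on $E$, that each $E_s$ is an order ideal, and that each $\eta_s$ is order-preserving. So the first step is simply to record that, since $\eta_{s^*} = \eta_s^{-1}$ and $\eta_{s^*}$ is also order-preserving by the same proposition, each $\eta_s$ is in fact an order \emph{isomorphism} from $E_{s^*}$ onto $E_s$; hence $\eta$ is an ordered global action on the poset $E$ in the sense of \cref{def:ordered_partial_action}.

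Next I would verify that $i\colon X \to E$, $i(x) = [e,x]$ (for any $e \in E(\Se)$ with $x \in X_e$), is an injective ordered $\Se$-equivariant map. Injectivity and $\Se$-equivariance are already part of the Demeneghi--Tasca construction recalled in \cref{remark:globalization}; what is new is that $i$ is order-preserving and that its inverse on $i(X)$ is order-preserving. Order-preservation is almost immediate from the definition of $\leqslant$ on $E$: if $x' \leqslant x$ in $X$, pick a common idempotent $e$ with $x, x' \in X_e$ (using \ref{e3}, since $X_e$ is an order ideal and $x' \leqslant x \in X_e$), and then $(e, x') \approx (e, x')$, $(e, x) \approx (e,x)$ witness $[e,x'] \leqslant [e,x]$, i.e. $i(x') \leqslant i(x)$. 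For the reverse direction, suppose $i(x') \leqslant i(x)$, say $[e, x'] \leqslant [e, x]$; by \cref{lemma:tec:item3} of \cref{lemma:tec} applied to the representative $(e,x)$ of $[e,x]$, there is $z' \leqslant x$ with $(e, z') \approx (e, x')$, and then \cref{lemma:tec:item1} gives $z' = x'$, so $x' \leqslant x$. This simultaneously shows $i$ is an order embedding, which is exactly the statement that $i$ induces an ordered equivalence between $\theta$ and the restriction of $\eta$ to $i(X)$ (the underlying set-theoretic equivalence is already known).

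Then I must check the two conditions of \cref{def:ordered_globalization}. Condition (ii) has two parts. That the orbit of $i(X)$ under $\eta$ is all of $E$ is already part of the set-theoretic globalization. That $i(X)$ is an order ideal of $E$ is the one genuinely new point: I would take $[t,y] \in i(X)$, so $[t,y] = [e,x]$ for some $e \in E(\Se)$, $x \in X_e$, and a general $[s,z] \leqslant [e,x]$ in $E$; by \cref{lemma:tec:item3} applied to the representative $(e,x)$, there is $z'' \leqslant x$ in $X$ with $(s,z) \approx (e, z'')$, and since $X_e$ is an order ideal and $z'' \leqslant x \in X_e$ we get $z'' \in X_e$, whence $[s,z] = [e, z''] = i(z'') \in i(X)$. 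Condition (i) was dispatched in the previous paragraph. The main obstacle is really concentrated in this order-ideal verification and in correctly threading \cref{lemma:tec} — in particular remembering that $[s,x] \leqslant [t,y]$ forces, for \emph{every} representative of $[t,y]$, a dominated representative of $[s,x]$; once that is internalized the argument is bookkeeping. I do not expect any obstruction requiring a new idea beyond what \cref{prop:induced_poset} and \cref{lemma:tec} already supply.
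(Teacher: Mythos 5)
Your proposal follows the paper's proof essentially step for step: take the set-theoretic globalization of \cref{remark:globalization}, invoke \cref{prop:induced_poset} for the induced order on $E$ and the fact that $\eta$ acts by order isomorphisms, then verify that $i$ is an order embedding and that $i(X)$ is an order ideal via \cref{lemma:tec:item3} of \cref{lemma:tec}. Your order-ideal argument and the forward order-preservation of $i$ are exactly the paper's.

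The one step that does not go through as written is the reverse implication $i(x')\leqslant i(x)\Rightarrow x'\leqslant x$. You write ``say $[e,x']\leqslant[e,x]$'', i.e.\ you assume a single idempotent $e$ with $x,x'\in X_e$. That choice is justified in the forward direction (where $x'\leqslant x$ is the hypothesis and $X_e$ is an order ideal), but not here: a priori $x\in X_e$ and $x'\in X_f$ for distinct idempotents $e\neq f$, possibly over different objects of $\Se$. Then \cref{lemma:tec:item3} only yields $z'\leqslant x$ with $(e,z')\approx(f,x')$, and \cref{lemma:tec:item1} does not apply because the first coordinates differ. The paper closes exactly this case by citing Lemma~3.12 of \cite{demeneghi-tasca-2024}, which gives $z'=\theta_e(z')=\theta_f(x')=x'$. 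Alternatively, you can repair the step with tools you already invoked: since $(e,z')\in D$ we have $z'\in X_e$, hence $i(z')=[e,z']=[f,x']=i(x')$, and the injectivity of $i$ (part of the set-theoretic globalization) forces $z'=x'\leqslant x$. The gap is local and easily patched; the rest of the argument is sound and coincides with the paper's.
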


\begin{proof}
Viewing $\theta$ as a set-theoretic partial action of $\Se$ on the underlying set $X$, let $(\eta,E,i)$ denote the globalization of $\theta$ described in \cref{remark:globalization}. By \cref{prop:induced_poset}, $E$ admits a partial order and $\eta$ is an ordered global action of $\Se$ on $E$.

We now prove that $i(X)$ is an order ideal of $E$. Let $x \in X$ and $[s,y] \in E$ be such that $[s,y]\leqslant i(x)$. Choose $e \in E(\Se)$ such that $x \in X_e$, so that $i(x)=[e,x]$ and hence $[s,y] \leqslant [e,x]$. By \cref{lemma:tec:item3} of \cref{lemma:tec}, there exists $x'\leqslant x$ such that $(s,y)\approx (e,x')$. It follows that $[s,y]=[e,x']=i(x')\in i(X)$, as desired.

It remains to show that $X$ is order isomorphic to $i(X)$. It follows immediately from the definitions of the partial order on $E$ in \cref{prop:induced_poset} and of $i$ in \cref{remark:globalization} that $i$ is an order-preserving map. Now, let $x,y \in X$ satisfy $i(x)\leqslant i(y)$. By choosing $e,f\in E(\Se)$ such that $x\in X_e$ and $y \in X_f$, we have $[e,x]=i(x)\leqslant i(y)=[f,y]$. By \cref{lemma:tec:item3} of \cref{lemma:tec}, there exists $y'\leqslant y$ such that $(f,y')\approx (e,x)$. By Lemma 3.12 of \cite{demeneghi-tasca-2024}, it follows that $y'=\theta_f(y')=\theta_e(x)=x$, thus $x=y'\leqslant y$, as desired.
\end{proof}

We can further refine \cref{o_teorema_1} by showing that the constructed globalization of $\theta$ has a universal property.

\begin{proposition}\label{prop:universal_globalization}
Let $\theta$ be an ordered partial action of an inverse semigroupoid on a partially ordered set $X$. The ordered globalization $(\eta,E,i)$ of $\theta$ given in the proof of \cref{o_teorema_1} is universal in the following sense: for any ordered global action $(\zeta,F)$ and any ordered $\Se$-equivariant map $j\colon X \to F$, there exists a unique ordered $\Se$-equivariant map $k\colon E\to F$ such that the diagram
$$\xymatrix{X \ar[rr]^{i} \ar[rd]_{j}& & E \ar@{..>}[ld]^{k}\\ & F &}$$
commutes.
\end{proposition}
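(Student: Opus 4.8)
The plan is to build $k$ on representatives and then check it respects all the relevant structure. Concretely, for a class $[s,x]\in E$ with $(s,x)\in D$ (so $x\in X_{s^*s}$), define
$$k([s,x]) \defeq \zeta_s\big(j(x)\big).$$
This is the only possible definition: since $i$ is $\Se$-equivariant and $k$ is to be $\Se$-equivariant with $k\circ i = j$, we are forced to have $k([s,x]) = k(\eta_s([e,x])) = \zeta_s(k([e,x])) = \zeta_s(k(i(x))) = \zeta_s(j(x))$ for any idempotent $e$ with $x\in X_e$ (using that $\zeta_e$ acts as the identity on the relevant piece, which follows from \ref{p1} and the fact that $\zeta$ is global). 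This immediately gives uniqueness of $k$ once existence is established, and it also gives $k\circ i = j$ for free.

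The first substantive step is \textbf{well-definedness}: if $(s,x)\approx(t,y)$ then $\zeta_s(j(x)) = \zeta_t(j(y))$. By the usual induction it suffices to treat the two generating relations. For \ref{r2}, $s,t\in E(\Se)$ and $x=y$, and we need $\zeta_s(j(x))=\zeta_t(j(x))$; both equal $j(x)$ because $j(x)\in F_s\cap F_t$ (as $j$ is $\Se$-equivariant and $x\in X_s\cap X_t$) and $\zeta$ restricted to idempotents is the identity on its domain by \ref{p1}. For \ref{r1}, we have $(t^*,s)\in\Se^{(2)}$, $x\in X_{s^*t}$ and $\theta_{t^*s}(x)=y$; then $j(y) = j(\theta_{t^*s}(x)) = \zeta_{t^*s}(j(x))$ by $\Se$-equivariance of $j$, so $\zeta_t(j(y)) = \zeta_t(\zeta_{t^*s}(j(x))) = \zeta_{tt^*s}(j(x))$ using \ref{eglob} (globality of $\zeta$), and since $j(x)$ lies in the appropriate domain one checks $\zeta_{tt^*s}(j(x)) = \zeta_s(j(x))$ — here one uses that $j(x)\in F_{s^*t}\subseteq F_{(s^*t)(s^*t)^*}=F_{s^*tt^*s}$, so $\zeta_{s^*s}$-type reductions apply, and $s\,(s^*t)\,(s^*t)^* \cdots$; the bookkeeping is routine but must be done carefully using that $tt^*s \leqslant s$ and \ref{e3} together with the idempotent-acts-as-identity property. (Alternatively one can invoke Lemma 3.12 of \cite{demeneghi-tasca-2024} for the analogous set-level statement and transport it.)

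The remaining steps are: \textbf{$\Se$-equivariance} of $k$, i.e. $k(E_s)\subseteq F_s$ and $k(\eta_s([p,x])) = \zeta_s(k([p,x]))$; the first follows from $[p,x]\in E_s$ giving a representative $(r,z)$ with $z\in X_{r^*ss^*r}$, whence $\zeta_r(j(z))\in F_{ss^*}\cap\cdots \subseteq F_s$ by $\Se$-equivariance of $j$ and \ref{p2}; the second is a direct computation $k(\eta_s([p,x])) = k([sp,x]) = \zeta_{sp}(j(x)) = \zeta_s(\zeta_p(j(x))) = \zeta_s(k([p,x]))$ using \ref{eglob}. Finally, \textbf{order-preservation} of $k$: if $[s,x]\leqslant[t,y]$ then by \cref{lemma:tec:item3} of \cref{lemma:tec} there is a representative $(t,y')$ of $[s,x]$ with $y'\leqslant y$, so $k([s,x]) = \zeta_t(j(y')) \leqslant \zeta_t(j(y)) = k([t,y])$ because $j$ is order-preserving and $\zeta_t$ is an order isomorphism. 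I expect the main obstacle to be the well-definedness check under \ref{r1}: keeping track of which order ideals $j(x)$ lands in and verifying the needed identity $\zeta_{tt^*s}(j(x)) = \zeta_s(j(x))$ via the restriction axioms \ref{e2}, \ref{e3} (equivalently \ref{p1}--\ref{p3}) requires care, though it is exactly the order-free argument already implicit in \cite{demeneghi-tasca-2024} and can be cited if a self-contained derivation proves tedious.
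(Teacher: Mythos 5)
Your proposal is correct and follows essentially the same route as the paper: the paper simply cites Theorem~3.15 of \cite{demeneghi-tasca-2024} for the set-theoretic existence and uniqueness of $k([s,x])=\zeta_s(j(x))$ (the part you re-derive by hand, including the well-definedness bookkeeping under \ref{r1}), and then verifies order-preservation exactly as you do, via \cref{lemma:tec:item3} of \cref{lemma:tec} together with the fact that $j$ and $\zeta_t$ are order-preserving.
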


\begin{proof}
At the set-theoretic level, Theorem~3.15 of \cite{demeneghi-tasca-2024} ensures the existence of a mediating function $k\colon E \to F$, necessarily given by
$k([s,x])=\zeta_s(j(x))$. It remains to verify that $k$ is order-preserving.

Let $[s,x], [t,y]\in E$ with $[s,x]\leqslant [t,y]$. By \cref{lemma:tec:item3} of \cref{lemma:tec}, there exists $y'\leqslant y$ such that $(t,y')\approx (s,x)$. Since $j$ is order-preserving, it follows that $j(y')\leqslant j(y)$. As $\zeta_t$ is also order-preserving, we conclude that
$$k([s,x])=k([t,y'])=\zeta_t(j(y'))\leqslant\zeta_t(j(y))=k([t,y]).$$
Therefore, $k$ is order-preserving, as desired.
\end{proof}



\section{P-theorem for inverse semigroupoids}\label{P-theorem}

We now turn to the main structural result of this paper. First, we recall the definition of a $P$-semigroup in the single-object case; next, we adapt this construction to the multi-object setting of inverse semigroupoids; and finally we prove that an inverse semigroupoid is $E$-unitary precisely when it arises from this construction.

A McAlister triple $(G,E,X)$ consists of a group $G$, a partially ordered set $E$, an order ideal $X$ of $E$ that is a meet semilattice under the induced order, together with a global action $\eta$ of $G$ on $E$ by order automorphisms, satisfying $\orb(X)=E$ and $\eta_g(X)\cap X \neq\emptyset$ for all $g \in G$.

The key ingredient in constructing a $P$-semigroup is the McAlister triple, which we now recall. From every McAlister triple we may construct an $E$-unitary inverse semigroup $P(G,E,X)$ as follows: as a set, define $P(G,E,X)=\{(g,x)\in G\times X \mid x \in X_{g^{-1}}\}$. The product is given by $(g,x)\cdot (h,y)=(gh,\eta_{h^{-1}}(x)\cdot y)$, in which $a\cdot b$ denotes the meet of $a$ and $b$ in $X$. An $E$-unitary inverse semigroup obtained in this way from a McAlister triple is said to be a $P$-semigroup. The McAlister $P$-theorem then asserts that every $E$-unitary inverse semigroup is isomorphic to a $P$-semigroup.

To extend this construction to the multi-object setting, we replace the group $G$ by a groupoid $\Ge$ acting by order isomorphisms on a poset $E$, and we upgrade the semilattice $X$ to a compatible family of semilattices indexed by the objects of $\Ge$.

\begin{definition}\label{def_semilatticeoid}
An inverse semigroupoid $X$ is said to be a \emph{(meet) semilatticeoid} if $X = E(X)$, meaning that every element of $X$ is idempotent.
\end{definition}

A semilatticeoid naturally decomposes into a disjoint union of meet semilattices. More precisely, if $X$ is a semilatticeoid, then for each $u \in X^{(0)}$, the set $X_u := d^{-1}=c^{-1}(u)$ forms a (meet) semilattice with the meet operation given by the product in $X$. Thus, $X$ can be expressed as the disjoint union $\bigsqcup_{u\in X^{(0)}} X_u$. Conversely, if $X=\bigsqcup_{u\in U} X_u$ is a disjoint union of meet semilattices, then is easy to verify that $(X,U)$ is a semilatticeoid with product defined by the meet operation.

\begin{remark}
Let $\theta=(\{X_s\}_{s\in\Se},\{\theta_s\}_{s\in\Se})$ be an ordered partial action of an inverse semigroupoid $\Se$ on a semilatticeoid $X$ (equipped with the natural partial order). Then, for each $s\in\Se$, $X_s$ is an ideal of $X$ in the sense that if $x\in X$, $y\in X_s$ and $(x,y)\in X^{(2)}$, then $xy \in X_s$. Indeed, since $xy\leqslant x$ and $X_s$ is an order ideal, it follows that $xy \in X_s$. 

Moreover, each map $\theta_s$ is a morphism of semigroupoids. That is, for all $x,y\in X_{s^*}$ such that $(x,y)\in X^{(2)}$, we have $(\theta_s(x),\theta_s(y))\in X^{(2)}$ and $\theta_s(xy)=\theta_s(x)\theta_s(y)$. To see this, note that $xy\leqslant x,y$ implies $\theta_s(xy)\leqslant \theta_s(x),\theta_s(y)$, since $\theta_s$ is order preserving. Hence, $(\theta_s(x),\theta_s(y))\in X^{(2)}$ and $$\theta_s(xy)\leqslant\theta_s(x)\theta_s(y).$$ 
Replacing $s$ by $s^*$, $x$ by $\theta_s(x)$ and $y$ by $\theta_s(y)$ in the last inequality we obtain $\theta_{s^*}\big(\theta_s(x)\theta_s(y)\big)\leqslant xy$. On the other hand, applying the order-preserving map $\theta_{s^*}$ to both sides of the highlighted inequality we obtain $xy\leqslant\theta_{s^*}(\theta_s(x)\theta_s(y))$. Thus, equality holds throughout, and we conclude that $\theta_s(xy) = \theta_s(x)\theta_s(y)$, as claimed.
\end{remark}

Combining the groupoid action on $E$ with the semilatticeoid structure, we can now define a McAlister triple in the multiobject setting.

\begin{definition}\label{def:Mclister_triple}
A \emph{McAlister triple} $(\Ge,E,X)$ consists of a groupoid $\Ge$, a partially ordered set $E$, an order ideal $X$ of $E$ which is a semilatticeoid under the induced order, together with an ordered global action $\eta$ of $\Ge$ on $E$ satisfying $\orb(X)=E$ and $\eta_g(X \cap E_{g^{-1}})\cap X\neq \emptyset$ for all $g \in \Ge$.
\end{definition}

When $(\Ge,E,X)$ is a McAlister triple, then  $\eta$ induces an ordered partial action $\theta$ of $\Ge$ on $X$ such that each $X_g$ is non-empty. Moreover, it is clear that $(\eta,E,i)$ is an ordered globalization of $(\theta,X)$, in which $i\colon X \to E$ is the inclusion map. Conversely, if $\theta$ is a partial action of $\Ge$ on a semilatticeoid $X$ such that each $X_g$ is non-empty, we can build a McAlister triple as follows.

\begin{theorem}\label{TMP}
Let $\theta$ be a ordered partial action of a groupoid $\Ge$ on a semilatticeoid $X$ such that each $X_g$ is non-empty. If $(\eta,E,i)$ is the ordered globalization in the proof of \cref{o_teorema_1}, then $(\Ge,E,i(X))$ is a McAlister triple.
\end{theorem}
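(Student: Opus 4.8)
The plan is to verify the four defining conditions of a McAlister triple (\cref{def:Mclister_triple}) for the triple $(\Ge,E,i(X))$, using what has already been established in the proof of \cref{o_teorema_1} and in \cref{prop:induced_poset}. Namely: (a) $E$ is a poset and $\eta$ is an ordered global action of $\Ge$ on $E$ — this is \cref{prop:induced_poset}; (b) $i(X)$ is an order ideal of $E$ that is order-isomorphic to $X$ — this was proved in \cref{o_teorema_1}, and since $X$ is a semilatticeoid under its natural partial order and $i$ is an order isomorphism onto $i(X)$, the ideal $i(X)$ inherits a semilatticeoid structure; (c) $\orb(i(X)) = E$ under $\eta$ — this is part (ii) of the definition of a globalization (\cref{def:ordered_globalization}), already verified for $(\eta,E,i)$; (d) the non-triviality condition $\eta_g\big(i(X)\cap E_{g^{-1}}\big)\cap i(X)\neq\emptyset$ for every $g\in\Ge$. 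So the only genuinely new thing to check is (d).

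**Next I would** unwind condition (d). Fix $g\in\Ge$. By hypothesis $X_{g^{-1}}$ is non-empty, so pick $x\in X_{g^{-1}}$; then $\theta_g(x)\in X_g\subseteq X$. Under the equivalence of $\theta$ with the restriction of $\eta$ to $i(X)$ (part (i) of \cref{def:ordered_globalization}, established in \cref{o_teorema_1}), the inclusion $x\in X_{g^{-1}} = X_{g^*}$ translates to $i(x)\in E_{g^{-1}}$, hence $i(x)\in i(X)\cap E_{g^{-1}}$; and $\eta_g(i(x)) = i(\theta_g(x))\in i(X)$ because $i$ is $\Ge$-equivariant. Therefore $i(\theta_g(x))$ is an element of $\eta_g\big(i(X)\cap E_{g^{-1}}\big)\cap i(X)$, which is consequently non-empty. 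Here it is worth recalling the concrete description from \cref{remark:globalization}: $i(x) = [e,x]$ for any idempotent $e\in E(\Ge)$ with $x\in X_e$, and $\eta_g([p,x]) = [gp,x]$; so one can alternatively just compute $\eta_g(i(x)) = \eta_g([g^{-1},x]) = [gg^{-1},x] = [c(g),x] = i(\theta_g(x))$ directly, using that $g^{-1}\in D_g$ in the sense of \cref{remark:globalization} since $x\in X_{g^{-1}}$.

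**The main point to be careful about** is just bookkeeping: confirming that the groupoid hypothesis on $\Ge$ is used only through the fact that $\Se/\sigma$-type subtleties do not arise — i.e. that $\eta$ being a \emph{global} action (equality in \ref{e2}, equivalently \ref{pglob}) together with $\Ge$ being a groupoid means $E_g = E_{gg^{-1}} = E_{c(g)}$, which is automatic and needs no extra argument — and that $i(X)$ really is a \emph{sub-inverse-semigroupoid}, not merely an order ideal, so that calling it a semilatticeoid is legitimate. This is immediate: $i(X)$ is order-isomorphic to the poset $X$, and a semilatticeoid structure on a poset is determined by the order (the product of $x,y$ over the same object is their meet), so transporting the semilatticeoid structure of $X$ along the order isomorphism $i$ equips $i(X)$ with a semilatticeoid structure whose induced order is exactly the one inherited from $E$. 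There is no real obstacle here; the proof is essentially an assembly of the pieces already in place, and I would keep it to a few lines, citing \cref{prop:induced_poset}, \cref{o_teorema_1}, \cref{def:ordered_globalization}, and \cref{remark:globalization} as appropriate.
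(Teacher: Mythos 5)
Your proposal is correct and follows essentially the same route as the paper: items (a)--(c) are read off directly from the properties of the ordered globalization, and the non-triviality condition is witnessed by the element $i(\theta_g(x))=\eta_g(i(x))$ for any $x\in X_{g^{-1}}$, which is exactly the paper's argument. The only blemish is in your optional explicit computation, where $i(x)$ should be written as $[g^{-1}g,x]$ rather than $[g^{-1},x]$ --- the pair $(g^{-1},x)$ need not even lie in $D$, since that would require $x\in X_{(g^{-1})^{*}g^{-1}}=X_{gg^{-1}}$ whereas you only know $x\in X_{g^{-1}}\subseteq X_{g^{-1}g}$ --- but this aside is not needed for your main (equivariance-based) argument, which is sound.
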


\begin{proof}
Being $(\eta,E,i)$ an ordered globalization of $(\theta,X)$, we know that $E$ is a poset, $\eta$ is an ordered global action of $\Ge$ on $E$, $i(X)$ is an ordered ideal of $E$, the orbit of $i(X)$ under $\eta$ equals $E$ and $i(X)$ is a semilatticeoid in the induced order since $X$ and $i(X)$ are ordered isomorphic. It only remains to prove that $g\cdot i(X)\cap i(X)\neq\emptyset$.

Let $x \in X_{g^{-1}}$. Since $i$ a $\Ge$-equivariant map, we have $i(x)\in E_{g^{-1}}$. As $X_{g^{-1}}\subseteq X_{g^{-1}g}$, we may write $i(x)=[g^{-1}g,x]$. Then $\eta_s(i(x))=\eta_s([g^{-1}g,x])=[g,x]$. On the other hand, since $\theta_g(x)\in X_g\subseteq X_{gg^{-1}}$, we have $i(\theta_g(x))=[gg^{-1},\theta_g(x)]=[g,x]$. Therefore, $[g,x]\in g\cdot i(X)\cap i(X)$.
\end{proof}

As in Kellendonk and Lawson \cite{KL2004}, this theorem demonstrates that McAlister triples correspond exactly to partial actions of groupoids on semilatticeoids.

We now turn to the construction of the $P$-semigroup associated to a McAlister triple. Guided by Theorem \ref{TMP}, we reformulate this in terms of partial actions of groupoids on semilatticeoids rather than in terms of triples. Accordingly, from now on whenever \(\theta\) is an ordered partial action of an inverse semigroupoid \(\Se\) on a poset \(X\), we assume \(X_s\neq\varnothing\) for every \(s\in\Se\).

\begin{theorem}\label{theo:semidirect_product}
Let $\theta$ be an ordered partial action of an inverse semigroupoid $\Se$ on a semilatticeoid $X$. Then 
\begin{equation}\label{semidirect_product_set}
\Se\ltimes_{\theta} X=\left\{(s,x)\in\Se\times X\mid x\in X_{s^*}\right\},
\end{equation}
admits a structure of inverse semigroupoid, henceforward called the semidirect product of $\Se$ and $X$, with partially defined product
\begin{equation}\label{semidirect_product_product}
(s,x)(t,y)=\Big(st,\theta_{t^*}\big(x\theta_t(y)\big)\Big)
\end{equation}
defined whenever $(s,t)\in\Se^{(2)}$ and $(x,\theta_t(y))\in X^{(2)}$, and involution
\begin{equation}\label{semidirect_product_involution}
(s,x)^*=(s^*,\theta_s(x))
\end{equation}
for all $(s,x)\in\Se\ltimes_{\theta} X$. Moreover, if $\Se$ is $E$-unitary, then so is $\Se\ltimes_{\theta} X$.
\end{theorem}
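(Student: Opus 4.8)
The plan is to endow $\Se\ltimes_\theta X$ with a graph structure, verify the semigroupoid axioms, establish existence and uniqueness of inverses, and finally deduce inheritance of $E$-unitarity. Writing $\pi\colon X\to X^{(0)}$ for the map sending each (idempotent) element of the semilatticeoid $X$ to its object, I take $d(s,x)=(d_{\Se}(s),\pi(x))$ and $c(s,x)=(c_{\Se}(s),\pi(\theta_s(x)))$, so that the two conditions under which the partial product in \eqref{semidirect_product_product} is defined amount exactly to $d(s,x)=c(t,y)$. The first task is to check that the product lands back in $\Se\ltimes_\theta X$: since $x\theta_t(y)\leqslant x\in X_{s^*}$ and $x\theta_t(y)\leqslant\theta_t(y)\in X_t$, the element $x\theta_t(y)$ lies in $X_t\cap X_{s^*}$, and \ref{p3} of \cref{partial_action_def2} applied to the composable pair $(s,t)$ gives $\theta_{t^*}(x\theta_t(y))=\theta_t^{-1}(x\theta_t(y))\in X_{(st)^*}$. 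The graph condition \cref{item_i} is then checked by observing that $\theta_{t^*}(x\theta_t(y))\leqslant y$ and $\theta_{st}(\theta_{t^*}(x\theta_t(y)))\leqslant\theta_s(x)$, whence these elements share the value of $\pi$ with $y$ and $\theta_s(x)$ respectively; here one uses that each $X_s$ is an order ideal, the inclusion \ref{e2}, and the standard restriction property $p\leqslant q\Rightarrow\theta_p=\restr{\theta_q}{X_{p^*}}$ of partial actions (cf. \cite{demeneghi-tasca-2024}).

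The crux is associativity. Rather than expanding $\big((s_1,x_1)(s_2,x_2)\big)(s_3,x_3)$ and $(s_1,x_1)\big((s_2,x_2)(s_3,x_3)\big)$ and comparing them, I would apply the injective map $\theta_{s_1s_2s_3}$ to the $X$-components of the two sides, both of which lie in $X_{(s_1s_2s_3)^*}$, and show that each collapses to the single symmetric expression
$$\theta_{s_1}(x_1)\,\theta_{s_1s_2}(x_2)\,\theta_{s_1s_2s_3}(x_3).$$
Each collapse uses only the inclusion \ref{e2}, the restriction property above, the fact that every $\theta_s$ preserves meets (established in the remark following \cref{def_semilatticeoid}), and injectivity of the maps $\theta_s$; equality of the $X$-components then follows because $\theta_{s_1s_2s_3}$ is injective. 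I expect the domain bookkeeping—verifying that every composite $\theta_a\circ\theta_b$ invoked is genuinely defined on the element to which it is applied—to be the most delicate point, and this is precisely where the order-ideal hypotheses and \ref{p3} do the work.

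For inverses I would check directly that $(s,x)^*=(s^*,\theta_s(x))$ as in \eqref{semidirect_product_involution} works: short computations give $(s,x)(s,x)^*=(ss^*,\theta_s(x))$ and $(s,x)^*(s,x)=(s^*s,x)$, from which $(s,x)(s,x)^*(s,x)=(s,x)$ and $(s,x)^*(s,x)(s,x)^*=(s,x)^*$ follow immediately. To promote existence to uniqueness of inverses, I would identify the idempotents of $\Se\ltimes_\theta X$ as exactly the pairs $(e,z)$ with $e\in E(\Se)$ and $z\in X_e$, and show that they commute: $(e,z)(f,w)=(ef,zw)=(fe,wz)=(f,w)(e,z)$, using that $E(\Se)$ commutes and that meets in the semilatticeoid $X$ commute. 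Regularity together with commuting idempotents forces inverses to be unique (cf. \cite{liu}), so $\Se\ltimes_\theta X$ is an inverse semigroupoid.

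Finally, assume $\Se$ is $E$-unitary. Using $(s,x)^*(s,x)=(s^*s,x)$, one computes that for parallel arrows $(s,x)\leqslant(t,y)$ holds if and only if $s\leqslant t$ in $\Se$ and $x\leqslant y$ in $X$; that is, the natural order is componentwise. Now suppose $(e,z)\leqslant(s,x)$ with $(e,z)$ idempotent, so that $e\in E(\Se)$ and $e\leqslant s$. Since $\Se$ is $E$-unitary, \cref{e_unitary_equiv} yields $s\in E(\Se)$, whence $x\in X_{s^*}=X_s$ and $(s,x)$ is itself idempotent. By the same characterization in \cref{e_unitary_equiv}, $\Se\ltimes_\theta X$ is $E$-unitary, completing the proof.
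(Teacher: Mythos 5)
Most of your architecture matches the paper's proof: the graph structure with $d(s,x)=(d(s),\pi(x))$ and $c(s,x)=(c(s),\pi(\theta_s(x)))$, the use of the order-ideal property and \ref{p3} to see that the product lands in $\Se\ltimes_\theta X$, the verification that $(s^*,\theta_s(x))$ is a pseudoinverse followed by commutativity of the idempotents $(e,z)$ and an appeal to \cite{liu}, and the componentwise description of the natural order leading to $E$-unitarity via \cref{e_unitary_equiv}. All of that is fine.

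The gap is in the associativity step. Your proposed common normal form
$\theta_{s_1}(x_1)\,\theta_{s_1s_2}(x_2)\,\theta_{s_1s_2s_3}(x_3)$
is not a well-defined expression for a genuinely \emph{partial} action: membership of $(s_2,x_2)$ in $\Se\ltimes_\theta X$ only gives $x_2\in X_{s_2^*}$, whereas $\theta_{s_1s_2}(x_2)$ requires $x_2\in X_{(s_1s_2)^*}$, and likewise $\theta_{s_1s_2s_3}(x_3)$ requires $x_3\in X_{(s_1s_2s_3)^*}$; neither inclusion holds in general (already for a group acting partially on a semilattice by restricting a global action to a proper order ideal). This is not mere ``domain bookkeeping'': the flat, fully pushed-forward meet is only available for global actions, which is precisely the distinction the theorem has to negotiate. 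What is true, and what the paper proves, is that both groupings collapse to the \emph{nested} expression $\theta_{(s_2s_3)^*}\big(x_1\,\theta_{s_2}(x_2\,\theta_{s_3}(x_3))\big)$ (equivalently, after applying the injective map $\theta_{s_1s_2s_3}$ as you suggest, to $\theta_{s_1}\big(x_1\,\theta_{s_2}(x_2\,\theta_{s_3}(x_3))\big)$, whose argument does lie in $X_{s_1^*}$ because the outer meet with $x_1$ pulls it into that order ideal). The key manipulation is the identity $\theta_{t^*}\big(x\theta_t(y)\big)=\theta_{t^*}\big(x\theta_t(y)\big)\,y$, which lets one absorb the next factor before pushing forward; your strategy can be repaired by replacing the flat normal form with this nested one, at which point it becomes essentially the paper's argument.
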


\begin{proof}
First of all, notice that the right-hand side of \eqref{semidirect_product_product} is a well-defined element of $\Se\ltimes X$. Indeed, since $\theta_t(y)\in X_t$ and $X_t$ is an (order) ideal of $X$, it follows that $x\theta_t(y) \in X_t$. Similarly, since $x\in X_{s^*}$, it follows that $x\theta_t(y) \in X_{s^*}$. By \ref{p3}, we have $\theta_{t^*}(x\theta_t(y))\in \theta_{t^*}(X_t\cap X_{s^*})\subseteq X_{(st)^*}$.

We show now that $\Se\ltimes_{\theta} X$ admits a graph structure compatible with the partially defined product in \eqref{semidirect_product_product}, with object set $(\Se\ltimes_{\theta} X)^{(0)}=\Se^{(0)}\times X^{(0)}$ and domain and codomain maps $d,c\colon\Se\ltimes_{\theta} X \to\Se^{(0)}\times X^{(0)}$ given by $d(s,x)=\big(d(s),d(x)\big)$ and $c(s,x)=\big(c(s),c(\theta_s(x))\big)$ for every $(s,x) \in \Se\ltimes_{\theta} X$. Indeed, let $\big((s,x),(t,y)\big)\in (\Se\ltimes_{\theta} X)^{(0)}$. By the previous paragraph, we have that $\theta_{t^*}(x\theta_t(y))$ lies in $\theta_{t^*}(X_t\cap X_{s^*})$, which is the domain of $\theta_s\circ\theta_t$. Hence, $\theta_{st}\big(\theta_{t^*}(x\theta_t(y))\big)=\theta_s(x\theta_t(y))$. Since $x$ is idempotent, and both $x$ and $x\theta_t(y)$ lie in $X_{s^*}$, and since $\theta_s$ is order-preserving, we have $\theta_s(x\theta_t(y))=\theta_s(x)\theta_s(x\theta_t(y))$. Similarly, using the order-preserving property of $\theta_{t^*}$ and the idempotency of $y$, we obtain $\theta_{t^*}(x\theta_t(y))=\theta_{t^*}(x\theta_t(y))\theta_{t^*}(\theta_t(y))=\theta_{t^*}(x\theta_t(y))y$. Therefore,
$$d\big((s,x)\cdot (t,y)\big)
=d\big(st,\theta_{t^*}(x\theta_t(y))\big)
=\Big(d(st),d\big(\theta_{t^*}(x\theta_t(y))y\big)\Big)
=(d(t),d(y))
=d(t,y)$$
and
$$c\big((s,x)\cdot (t,y)\big)
=c\big(st,\theta_{t^*}(x\theta_t(y))\big)
=\Big(c(st),c\big(\theta_s(x)\theta_s(x\theta_t(y))\big)\Big)
=\big(c(s),c(\theta_s(x))\big)
=c(s,x),$$
as required.

In order to conclude that $\Se\ltimes_{\theta} X$ is a semigroupoid, it remains only to verify the associativity of the product. To this end, let $(s,x),(t,y),(r,z) \in\Se\ltimes_{\theta} X$ such that $\big((s,x),(t,y)\big)\in (\Se\ltimes_{\theta} X)^{(0)}$ and $\big((t,y),(r,z)\big)\in (\Se\ltimes_{\theta} X)^{(0)}$. Then $(s,t), (t,r)\in\Se^{(2)}$ and $(x,\theta_t(y)),(y,\theta_r(z))\in X^{(2)}$. Since $\Se$ is a semigroupoid, we have $(st)r=s(tr)$. Therefore, it remains to verify that
$$\theta_{r^*}\Big(\theta_{t^*}\big(x\theta_t(y)\big)\theta_r(z)\Big)
=\theta_{(tr)^*}\Big(x\theta_{tr}\big(\theta_{r^*}(y\theta_r(z))\big)\Big).$$
First, notice that both sides of this equality are well defined by the compatibility of the graph structure of $\Se\ltimes_{\theta} X$ with the product.

Since both $x\theta_t(y)$ and $\theta_t(y)$ lie in $X_t$, $\theta_{t^*}$ is order-preserving and $\theta_t(y)$ is idempotent, it follows that
$$
\theta_{t^*}\big(x\theta_t(y)\big)
= \theta_{t^*}\big(x\theta_t(y)\big)\theta_{t^*}(\theta_t(y))
= \theta_{t^*}\big(x\theta_t(y)\big)y.
$$
Furthermore, since $y\theta_r(z)$ lies in $X_{t^*}$, $y$ is idempotent and both $\theta_t$ and $\theta_{t^*}$ are order-preserving, we may write
$$
\theta_{t^*}\big(x\theta_t(y)\big)y\theta_r(z)
=\theta_{t^*}\big(x\theta_t(y)\big)\theta_{t^*}\big(\theta_t(y\theta_r(z))\big)
=\theta_{t^*}\big(x\theta_t(y)\theta_t(y\theta_r(z))\big)
=\theta_{t^*}\big(x\theta_t(y\theta_r(z))\big).
$$
Thus $x\theta_t(y\theta_r(z))$ lies in the domain of $\theta_{r^*}\circ\theta_{t^*}$ and we obtain
$$
\theta_{r^*}\Big(\theta_{t^*}\big(x\theta_t(y)\big)\theta_r(z)\Big)
=\theta_{r^*}\Big(\theta_{t^*}\big(x\theta_t(y)\big)y\theta_r(z)\Big)
=\theta_{r^*}\Big(\theta_{t^*}\big(x\theta_t(y\theta_r(z))\big)\Big)
=\theta_{(tr)^*}\big(x\theta_t(y\theta_r(z))\big).
$$
Finally, notice that $y\theta_r(z)=\theta_r(\theta_{r^*}(y\theta_r(z)))$ and $\theta_{r^*}(y\theta_r(z))$ lies in the domain of $\theta_t\circ\theta_r$, so that $x\theta_t(y\theta_r(z))=x\theta_t\big(\theta_r(\theta_{r^*}(y\theta_r(z)))\big)=x\theta_{tr}\big(\theta_{r^*}(y\theta_r(z))\big)$ and moreover
$$
\theta_{r^*}\Big(\theta_{t^*}\big(x\theta_t(y)\big)\theta_r(z)\Big)
=\theta_{(tr)^*}\big(x\theta_t(y\theta_r(z))\big)
=\theta_{(tr)^*}\Big(x\theta_{tr}\big(\theta_{r^*}(y\theta_r(z))\big)\Big),
$$
as desired.

We now prove that $\Se\ltimes_{\theta} X$ is an inverse semigroupoid. First, observe for every $(s,x)\in\Se\ltimes_{\theta} X$, we have $(s,x) (s^*,\theta_s(x)) (s,x)=(s,x)$, so each element admits a pseudoinverse. According to Theorem 2.1.1 of \cite{liu}, it suffices to verify that $E(\Se\ltimes_{\theta} X)$ is commutative to finish the argument. Note that the idempotent elements of $\Se\ltimes_{\theta} X$ are precisely those of the form $(e,x)\in\Se\ltimes_{\theta} X$ with $e\in E(\Se)$. Let $(e,x),(f,y)\in\Se\ltimes_{\theta} X$ be such that $e,f\in E(\Se)$ and $\big((e,x),(f,y)\big)\in (\Se\ltimes_{\theta} X)^{(2)}$. Then $(e,f)\in\Se^{(2)}$, $(x,y)\in X^{(2)}$ and $(e,x)(f,y)=(ef,xy)=(fe,yx)=(f,y)(e,x)$.

Finally, to prove that $\Se\ltimes_{\theta} X$ is $E$-unitary, let $(s,x),(e,y)\in\Se\ltimes_{\theta} X$ be parallel arrows such that $(e,y)\in E(\Se)$ and $(e,y)\leqslant (s,x)$. Then, we have $(s,e)\in \Se^{(2)}$, $(x,y)\in X^{(2)}$, and $(e,y)=(s,x)(e,y)=(se,xy)$. It follows that $s=se$, hence $e\leqslant s$. Since $\Se$ is $E$-unitary, it follows from \cref{e_unitary_equiv} that $s\in E(\Se)$. Therefore, $(s,x)\in E(\Se\ltimes_{\theta} X)$, and once again by \cref{e_unitary_equiv}, we conclude that $\Se\ltimes_{\theta} X$ is $E$-unitary.
\end{proof}

By specializing the previous theorem to the case where $\Se$ is a groupoid $\Ge$, we obtain the following corollary immediately.

\begin{corollary}\label{prop:P-semigroup_is_E-unitary}
Let $\theta$ be an ordered partial action of groupoid $\Ge$ on a semilatticeoid $X$. Then, the semidirect product $\Ge\ltimes X$ is an $E$-unitary inverse semigroupoid.
\end{corollary}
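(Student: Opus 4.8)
The plan is to obtain this statement as a direct specialization of \cref{theo:semidirect_product}. Since a groupoid $\Ge$ is in particular an inverse semigroupoid, \cref{theo:semidirect_product} already equips $\Ge\ltimes_{\theta} X$ with the structure of an inverse semigroupoid, with product \eqref{semidirect_product_product} and involution \eqref{semidirect_product_involution}. So the only thing left to check is that $\Ge\ltimes_{\theta} X$ is $E$-unitary, and by the final assertion of \cref{theo:semidirect_product} this will follow once we know that the groupoid $\Ge$ is itself $E$-unitary.

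To verify that $\Ge$ is $E$-unitary, I would appeal to item (v) of \cref{e_unitary_equiv}: it suffices to show that whenever $g\in\Ge$, $e\in E(\Ge)$ and $e\leqslant g$, one has $g\in E(\Ge)$. But in a groupoid the natural partial order coincides with equality — as recorded in the Remark immediately following the introduction of the natural partial order — so $e\leqslant g$ forces $g=e\in E(\Ge)$, and the condition of item (v) is satisfied (vacuously, in the sense that the only arrow above an idempotent is that idempotent itself). Hence $\Ge$ is $E$-unitary, and \cref{theo:semidirect_product} yields that $\Ge\ltimes_{\theta} X$ is an $E$-unitary inverse semigroupoid.

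There is essentially no obstacle here beyond bookkeeping: one must only make sure that the hypotheses of \cref{theo:semidirect_product} are in force, namely that $X$ is a semilatticeoid and $\theta$ is an ordered partial action, which are precisely the standing hypotheses of the corollary; everything else is inherited verbatim from the theorem.
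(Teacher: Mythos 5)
Your proposal is correct and follows exactly the route the paper intends: the paper states the corollary with no proof, remarking only that it follows ``immediately'' by specializing \cref{theo:semidirect_product} to the case $\Se=\Ge$, and your verification that a groupoid is $E$-unitary (via the triviality of the natural partial order and item (v) of \cref{e_unitary_equiv}) is precisely the detail being left implicit.
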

 
We say that an $E$-unitary inverse semigroupoid is a $P$-semigroupoid if it is isomorphic to a semidirect product obtained from an ordered partial action of a groupoid on a semilatticeoid.

To verify that the product in Definition \ref{semidirect_product_product} of Theorem \ref{theo:semidirect_product} behaves correctly, note that in the classical McAlister triple $(G,E,X)$ with a global action $\eta$, note that in the classical McAlister triple $\eta_{h^{-1}}(x)y$ as
$$\eta_{h^{-1}}(x)y=\eta_{h^{-1}}(x)\eta_{h^{-1}}(\eta_h(y))=\eta_{h^{-1}}(x\eta_h(y)),$$
making explicit the compatibility in both products.

In order to establish that $E$-unitary inverse semigroupoids coincide with $P$-semigroupoids, we introduce the Munn action—an intrinsic action of the semigroupoid on the semilattice of its idempotents.

\begin{example}\label{Munn-global}
Let $\Se$ be an inverse semigroupoid. For each $s\in\Se$ we set $X_s := \{e\in E(\Se) \mid e\leqslant ss^*\}$. Clearly, $X_s$ is an order ideal of $E(\Se)$. Moreover, if $e\in X_{s^*}$, then $(e,s^*), (s,e)\in \Se^{(2)}$ and $ses^*\leqslant s(s^*s)s^*=ss^*$, so $ses^*\in X_s$. Therefore, for each $s\in\Se$ we have a well-defined function $\theta_s\colon X_{s^*} \to X_{s}$ given by $\theta_s(e)=ses^*$, which is clearly order-preserving.

The pair $\theta=(\{X_s\}_{s\in \Se}, \{\theta_s\}_{s\in \Se})$ defines an ordered global action of $\Se$ on $E(\Se)$. To verify this, we check \cref{p1,p2,p3,pglob} from \cref{partial_action_def2}.
	
For \ref{p1}, let $e \in E(\Se)$. If $f\in X_e$, then $(e,f), (f,e)\in \Se^{(2)}$, $ef=f=fe$ and $\theta_e(f)=efe=f$. This proves that $\theta_e$ is the identity map on $X_e$. Also, it is easily seen that $e \in X_e$ since $e \leqslant e$.

Next, note that $X_s=X_{ss^*}$ for all $s\in\Se$, so \cref{p2,pglob} are immediately satisfied. 

It only remains to verify \ref{p3}. For this purpose, let $s,t\in\Se$ with $(s,t)\in\Se^{(2)}$. If $e \in X_{(st)^*} \cap X_{t^*}$, then $e\leqslant t^*s^*st$ and $\theta_t(e)=tet^*\leqslant t(t^*s^*st)t^*=tt^*s^*s\leqslant tt^*,s^*s$, which ultimately means that $e \in \theta_t^{-1}(X_t\cap X_{s^*})$. Conversely, if $e \in \theta^{-1}_t(X_t\cap X_s)$, then $tet^*=\theta_t(e) \leqslant tt^*,s^*s$ and, since $e\in X_{t^*}$, we have $e=t^*tet^*t\leqslant t^*t,t^*s^*st$ which amounts to say that $e \in X_{(st)^*} \cap X_{t^*}$.
\end{example}

Let $\Se$ be an $E$-unitary inverse semigroupoid.  The following proposition will yield an ordered partial action of the quotient $\Se/\sigma$ on the idempotent semilattice $E(\Se)$, arising naturally from the Munn action of $\Se$.
. 

\begin{proposition}\label{partMunn}
Let $\Se$ be an $E$-unitary inverse semigroupoid and $X$ be a partially ordered set. Then, every ordered global action $\theta=(\{X_s\}_{s\in \Se}, \{\theta_s\}_{s\in \Se})$ of $\Se$ on $X$ induces an ordered partial action $\alpha=(\{D_{\pi_{\sigma}(s)}\}_{s\in \Se}, \{\alpha_{\pi_{\sigma}(s)}\}_{s\in \Se})$ of $\Se/\sigma$ on $X$ such that $\alpha_{\pi_{\sigma}(s)}(x)=\theta_s(x)$ for every $x \in X_{s^*}$.
\end{proposition}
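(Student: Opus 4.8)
The plan is to build $\alpha$ by averaging the $\theta_r$ over each $\sigma$-class. For $u\in\Se/\sigma$ set
$$D_u:=\bigcup_{\substack{r\in\Se\\ \pi_{\sigma}(r)=u}}X_r\subseteq X,$$
and define $\alpha_u\colon D_{u^{-1}}\to D_u$ by $\alpha_u(x):=\theta_s(x)$ whenever $s\in\Se$ satisfies $\pi_{\sigma}(s)=u$ and $x\in X_{s^*}$. Since any congruence on an inverse semigroupoid respects the involution, $\pi_{\sigma}(s)=u$ iff $\pi_{\sigma}(s^*)=u^{-1}$, so $D_{u^{-1}}=\bigcup\{X_{s^*}\mid \pi_{\sigma}(s)=u\}$ and every $x\in D_{u^{-1}}$ admits such an $s$; moreover $\theta_s(x)\in X_s\subseteq D_u$, so the codomain is as claimed. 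Writing $D_{\pi_{\sigma}(s)}$, $\alpha_{\pi_{\sigma}(s)}$ for these objects, the asserted identity $\alpha_{\pi_{\sigma}(s)}(x)=\theta_s(x)$ for $x\in X_{s^*}$ holds by construction, so the real content is (i) that $\alpha_u$ is well defined, i.e.\ independent of the representative $s$, and (ii) that $\alpha=(\{D_{\pi_{\sigma}(s)}\},\{\alpha_{\pi_{\sigma}(s)}\})$ is an ordered partial action of the groupoid $\Se/\sigma$ on $X$.

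Step (i) is the heart of the matter, and the only place the $E$-unitary hypothesis is used. Suppose $s\sigma t$ (hence $s,t$ are parallel, as $\sigma$ is graphed) and $x\in X_{s^*}\cap X_{t^*}$; I must show $\theta_s(x)=\theta_t(x)$. Since $\theta$ is global, \cref{pglob} gives $X_{s^*}=X_{s^*s}$ and $X_{t^*}=X_{t^*t}$, and a short computation from \cref{p1,p3} of \cref{partial_action_def2} identifies $X_{s^*s}\cap X_{t^*t}$ with $X_{s^*st^*t}$; hence $x\in X_{s^*st^*t}$ and $\theta_{s^*st^*t}(x)=x$ by \cref{p1}. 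Using $s\,(s^*st^*t)=st^*t$ together with \cref{eglob} we obtain $\theta_{st^*t}(x)=\theta_s\big(\theta_{s^*st^*t}(x)\big)=\theta_s(x)$, and symmetrically $\theta_{ts^*s}(x)=\theta_t(x)$. Now \cref{lema-tecnico-idempotentes}, whose hypothesis is precisely that $\Se$ be $E$-unitary, yields $st^*t=ts^*s$, so $\theta_s(x)=\theta_{st^*t}(x)=\theta_{ts^*s}(x)=\theta_t(x)$. I expect this step — especially the bookkeeping needed to check that $x$ lands in the relevant domains $X_{s^*st^*t}$, $X_{(st^*t)^*}$, etc. — to be the main obstacle; the rest is formal.

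Granting (i), I would verify axioms \cref{e1,e2,e3} of \cref{partial_action_def_3}. Axiom \cref{e3} is vacuous, since $\Se/\sigma$ is a groupoid and hence its natural partial order is equality. For \cref{e1}: $\bigcup_u D_u=\bigcup_{r\in\Se}X_r=X$; for $y\in X_t$ with $\pi_{\sigma}(t)=u$ one has $\alpha_u\big(\theta_{t^*}(y)\big)=y$, giving surjectivity, while taking $t^*$ as representative of $u^{-1}$ exhibits $\alpha_{u^{-1}}$ as a two-sided inverse of $\alpha_u$. For \cref{e2}: any representatives of composable classes $u=\pi_{\sigma}(s)$, $v=\pi_{\sigma}(t)$ are themselves composable in $\Se$; given $x$ in the domain of $\alpha_u\circ\alpha_v$, choose $t$ with $x\in X_{t^*}$ and $s$ with $\theta_t(x)\in X_{s^*}$, so $\theta_t(x)\in X_t\cap X_{s^*}$, and \cref{p3} of \cref{partial_action_def2} gives $x\in X_{(st)^*}\cap X_{t^*}$ and $\theta_s(\theta_t(x))=\theta_{st}(x)$; since $\pi_{\sigma}(st)=uv$ this is exactly $\alpha_u(\alpha_v(x))=\theta_{st}(x)=\alpha_{uv}(x)$. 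Finally, for the ordered structure: each $D_u$, being a union of the order ideals $X_r$, is an order ideal of $X$; and $\alpha_u$ is order-preserving, for if $x\leqslant x'$ in $D_{u^{-1}}$ and $x'\in X_{t^*}$ then $x\in X_{t^*}$ too, whence $\alpha_u(x)=\theta_t(x)\leqslant\theta_t(x')=\alpha_u(x')$ because $\theta_t$ is an order isomorphism — the same applies to $\alpha_{u^{-1}}=\alpha_u^{-1}$, so each $\alpha_u$ is an order isomorphism. This establishes that $\alpha$ is an ordered partial action of $\Se/\sigma$ on $X$ with the stated property.
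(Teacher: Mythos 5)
Your proposal is correct and takes essentially the same route as the paper: in both, the crux is the well-definedness of $\alpha_{\pi_\sigma(s)}$, reduced for $\sigma$-congruent parallel arrows $s,t$ and $x\in X_{s^*}\cap X_{t^*}$ to the identity $st^*t=ts^*s$ of \cref{lema-tecnico-idempotentes}, which is exactly where the $E$-unitary hypothesis enters, and the remaining verifications are routine. The only (immaterial) divergence is that you check axioms \ref{e1}--\ref{e3} of \cref{partial_action_def_3}, observing that \ref{e3} is vacuous over a groupoid, whereas the paper checks \ref{p1}--\ref{p3} of \cref{partial_action_def2}; by the paper's own equivalence of these axiom lists the two verifications are interchangeable.
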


\begin{proof}
First of all, notice that if $s,t\in\Se$ are $\sigma$-congruent and $x\in X_{s^*}\cap X_{t^*}$, then setting $f=t^*ts^*s$ we have $x\in X_f=X_{s^*s}\cap X_{t^*t}$ and, by \cref{lema-tecnico-idempotentes}, that
$$sf = st^*ts^*s = ss^*st^*t = st^*t = tss^* = tt^*tss^* = tf,$$
which further gives
$$\theta_s(x)=\theta_s(\theta_f(x))=\theta_{se}(x)=\theta_{te}(x)=\theta_t(\theta_f(x))=\theta_t(x).$$
This implies that, for each $\sigma$-class, the maps $\theta_s$ with $s$ ranging over that class, can be glued to form a well-defined map. More specifically, for each $s\in\Se$, setting $D_{\pi_{\sigma}(s)}=\bigcup_{(s,t)\in\sigma} X_t$ we have a well-defined map $\alpha_{\pi_{\sigma}(s)}\colon D_{\pi_{\sigma}(s^*)}\to D_{\pi_{\sigma}(s)}$ given by $\alpha_{\pi_{\sigma}(s)}(x)=\theta_t(x)$ in which $t\in\Se$ is any element $\sigma$-congruent to $s$ with $x\in X_{t^*}$.

It is clear that each $D_{\pi_{\sigma}(s)}$ is an order ideal of $X$. Now let $x,y \in D_{\pi_{\sigma}(s)}$ with $x\leqslant y$. Since $y\in D_{\pi_{\sigma}(s)}$, there exists $t\in\Se$ such that $(s,t)\in\sigma$ and $y\in X_{t^*}$. As $X_{t^*}$ is an order ideal of $X$, we have $x\in X_{t^*}$. Therefore, since $\theta_t$ order-preserving, we obtain $\alpha_{\pi_{\sigma}(s)}(x)=\theta_t(x)\leqslant \theta_t(y)=\alpha_{\pi_{\sigma}(s)}(y)$, proving that $\alpha_{\pi_{\sigma}(s)}$ is order-preserving.

We now prove that $\alpha=(\{D_{\pi_{\sigma}(s)}\}_{s\in \Se},\{\alpha_{\pi_{\sigma}(s)}\}_{s\in \Se})$ is an ordered partial action of $\Se/\sigma$ on $X$. Indeed, we are going to verify \cref{p1,p2,p3,pglob} from \cref{partial_action_def2}.

For \ref{p1}, let $e \in E(\Se)$. If $x\in D_{\pi_{\sigma}(e)}$, then there exist $s\in\Se$ such that $(s,e)\in\sigma$ and $x\in X_{s^*}$. Since $\Se$ is $E$-unitary, we must have $s\in E(\Se)$ and so $\alpha_{\pi_{\sigma}(e)}(x)=\theta_s(x)=x$. Moreover, if $x\in X$, then there exists $e\in E(\Se)$ such that $x\in X_e\subseteq D_{\pi_{\sigma}(e)}$.

For \ref{p2}, let $s\in\Se$. If $t\in\Se$ is such that $(s,t)\in\sigma$, we must have $(ss^*,tt^*)\in\sigma$ because $\sigma$ is a congruence. This implies that $X_t=X_{tt^*}\subseteq D_{\pi_{\sigma}(ss^*)}$ for any $t\in\Se$ such that $(s,t)\in\sigma$ and, so $D_{\pi_{\sigma}(s)}=\bigcup_{(s,t)\in\sigma} X_t\subseteq D_{\pi_{\sigma}(ss^*)}$.

Finally, let $(s,t)\in\Se^{(2)}$ and lets argue that $\alpha^{-1}_{\pi_{\sigma}(t)}(D_{\pi_{\sigma}(t)} \cap D_{\pi_{\sigma}(s^*)}) = D_{\pi_{\sigma}((st)^*)}\cap D_{\pi_{\sigma}(t^*)}$ to deduce \ref{p3}. If $x \in \alpha^{-1}_{\pi_{\sigma}(t)}(D_{\pi_{\sigma}(t)} \cap D_{\pi_{\sigma}(s^*)})$, we have $x\in D_{\pi_{\sigma}(t^*)}$ and $\alpha_{\pi_{\sigma}(t)}(x)\in D_{\pi_{\sigma}(t)} \cap D_{\pi_{\sigma}(s^*)}$. Since $x\in D_{\pi_{\sigma}(t^*)}$, there exists $t_0\in\Se$ such that $(t,t_0)\in\sigma$, $x\in X_{t_0^*}$ and $\alpha_{\pi_{\sigma}(t)}(x)=\theta_{t_0}(x)$. From $\theta_{t_0}(x)=\alpha_{\pi_{\sigma}(t)}(x)\in D_{\pi_{\sigma}(s^*)}$, there exists $s_0\in\Se$ such that $(s_0,s)\in\sigma$ and $\theta_{t_0}(x)\in X_{t_0}\cap X_{s^*_0}$. As $s_0$ and $t_0$ are parallel to $s$ and $t$, respectively, we have $(s_0,t_0)\in\Se^{(2)}$. Therefore, using that $\sigma$ is a congruence and $\theta$ is an action, we obtain
$$x\in \theta_{t_0}^{-1}(X_{t_0}\cap X_{s^*_0})=X_{(s_0t_0)^*}\cap X_{t_0^*}\subseteq D_{\pi_{\sigma}((st)^*)}\cap D_{\pi_{\sigma}(t^*)}.$$

Conversely, if $x\in D_{\pi_{\sigma}((st)^*)}\cap D_{\pi_{\sigma}(t^*)}$, then there exists $r_0, t_0\in\Se$ such that $(st,r_0),(t,t_0)\in\sigma$, $x\in X_{t_0^*}\cap X_{r_0^*}$ and $\alpha_{\pi_{\sigma}(t)}(x)=\theta_{t_0}(x)$. Since $t_0$ and $r_0$ are parallel to $t$ and $st$, respectively, and $(t,(st)^*)\in\Se^{(2)}$, we must have $(t_0,r_0^*)\in\Se^{(2)}$. Using Proposition 2.7 of \cite{demeneghi-tasca-2024}, we have $\theta_{t_0}(X_{t_0^*}\cap X_{r_0^*})=X_{t_0}\cap X_{t_0r_0^*}$. Moreover, as $\sigma$ is a congruence, we have that $t_0r_0^*$ is $\sigma$-congruent to $tt^*s^*$ which in turn is $\sigma$-congruent to $s^*$. Therefore,
$$\alpha_{\pi_{\sigma}(t)}(x)=\theta_{t_0}(x)\in \theta_{t_0}(X_{t_0^*}\cap X_{r_0^*})=X_{t_0}\cap X_{t_0r_0^*} \subseteq D_{\pi_{\sigma}(t)} \cap D_{\pi_{\sigma}(s^*)},$$
which gives $x\in \alpha^{-1}_{\pi_{\sigma}(t)}(D_{\pi_{\sigma}(t)} \cap D_{\pi_{\sigma}(s^*)})$ as desired. Moreover, we have
$$\alpha_{\pi_{\sigma}(s)}(\alpha_{\pi_{\sigma(t)}}(x))=\theta_{r_0t_0^*}(\theta_{t_0}(x))=\theta_{r_0}(x)=\alpha_{\pi_{\sigma}(st)}(x),$$
and \ref{p3} is proved
\end{proof}

Finally, we establish our version of McAlister’s $P$-theorem, showing that any $E$-unitary inverse semigroupoid arises from the $P$-semigroupoid construction.

\begin{theorem}\label{P-theoremConstruct2}
Let $\Se$ is an $E$-unitary inverse semigroupoid and $\theta$ be the Munn action of $\Se$ on $E(\Se)$. If $\alpha$ is the induced action of $\Se/\sigma$ on $E(\Se)$ given in \cref{partMunn}, then $\Se \cong \Se/\sigma \ltimes_\alpha E(\Se)$.
\end{theorem}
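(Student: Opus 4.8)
The plan is to exhibit the obvious candidate isomorphism
\[
\Phi\colon\Se\longrightarrow \Se/\sigma\ltimes_\alpha E(\Se),\qquad \Phi(s)=\big(\pi_\sigma(s),\,s^*s\big),
\]
and to check that it is a bijective morphism of semigroupoids. First one verifies $\Phi$ is well defined: $s^*s$ is an idempotent with $s^*s\leqslant s^*(s^*)^*=s^*s$, so $s^*s$ lies in the set $X_{s^*}$ of the Munn action (\cref{Munn-global}); since $X_{s^*}\subseteq D_{\pi_\sigma(s^*)}=D_{\pi_\sigma(s)^*}$, the pair $(\pi_\sigma(s),s^*s)$ is indeed an element of $\Se/\sigma\ltimes_\alpha E(\Se)$.

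Next I would check that $\Phi$ is a semigroupoid morphism. For $(s,t)\in\Se^{(2)}$, composability of $(\Phi(s),\Phi(t))$ in the semidirect product reduces to $(s^*s,tt^*)\in E(\Se)^{(2)}$, which holds since $d(s^*s)=d(s)=c(t)=c(tt^*)$. To compute $\Phi(s)\Phi(t)$ one uses the product formula of \cref{theo:semidirect_product} together with the identity $\alpha_{\pi_\sigma(r)}(e)=\theta_r(e)$ from \cref{partMunn}: one finds $\alpha_{\pi_\sigma(t)}(t^*t)=\theta_t(t^*t)=t\,t^*t\,t^*=tt^*$, so the second coordinate of the product is $\alpha_{\pi_\sigma(t^*)}\big((s^*s)(tt^*)\big)=\theta_{t^*}\big((s^*s)(tt^*)\big)=t^*(s^*s)(tt^*)t=t^*tt^*s^*st=t^*s^*st=(st)^*(st)$, where I have used that idempotents commute and that $t^*tt^*=t^*$. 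Hence $\Phi(s)\Phi(t)=(\pi_\sigma(st),(st)^*(st))=\Phi(st)$.

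For injectivity, suppose $\Phi(s)=\Phi(t)$; then $s\,\sigma\,t$ (so $s,t$ are parallel) and $s^*s=t^*t$. Since $\Se$ is $E$-unitary, \cref{lema-tecnico-idempotentes} gives $st^*t=ts^*s$, and substituting $s^*s=t^*t$ yields $s=ss^*s=st^*t=ts^*s=tt^*t=t$. For surjectivity, take $(g,e)\in\Se/\sigma\ltimes_\alpha E(\Se)$: write $g=\pi_\sigma(w)$, and since $e\in D_{g^*}=D_{\pi_\sigma(w^*)}=\bigcup_{t\,\sigma\,w^*}X_t$, choose $t\in\Se$ with $t\,\sigma\,w^*$ and $e\leqslant tt^*$. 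Put $s:=t^*e$; this is a legitimate composite because $c(e)=c(tt^*)=c(t)=d(t^*)$. Then $s^*s=(et)(t^*e)=e(tt^*)e=e$, using $e\leqslant tt^*$; and $\pi_\sigma(s)=\pi_\sigma(t^*)\pi_\sigma(e)=\pi_\sigma(t^*)$, since $\pi_\sigma(e)$ is an idempotent of the groupoid $\Se/\sigma$, hence the identity over $c(t)=d(\pi_\sigma(t^*))$; and $\pi_\sigma(t^*)=\pi_\sigma(t)^*=\pi_\sigma(w^*)^*=\pi_\sigma(w)=g$. Thus $\Phi(s)=(g,e)$.

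Finally, $\Phi$ is a bijective morphism whose object map is the diagonal embedding $\Se^{(0)}\hookrightarrow\Se^{(0)}\times\Se^{(0)}$, which is injective; this forces $\Phi$ to reflect composable pairs, so $\Phi^{-1}$ is again a morphism and $\Phi$ is an isomorphism of inverse semigroupoids. The step that requires the most care is the product computation in the second paragraph, together with keeping straight the three layers of structure at play — the Munn action $\theta$ of $\Se$, the induced action $\alpha$ of $\Se/\sigma$, and the semidirect product — while the only non‑mechanical point is the choice of preimage $s=t^*e$ in the surjectivity argument.
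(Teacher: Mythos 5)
Your proposal is correct and follows essentially the same route as the paper: the same map $s\mapsto(\pi_\sigma(s),s^*s)$, the same product computation reducing to $(st)^*(st)$, injectivity via \cref{lema-tecnico-idempotentes}, and surjectivity by picking a representative $t$ of the $\sigma$-class with $e$ in its domain and taking the product of $e$ with $t$ (your $t^*e$ is the paper's $te$ up to relabelling). The only cosmetic difference is that you justify the reflection of composability via injectivity of the object map, where the paper argues directly that $(\phi(s),\phi(t))$ composable forces $(s^*s,tt^*)\in\Se^{(2)}$; both are fine.
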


\begin{proof}
We shall adopt the same notation used in \cref{Munn-global} and \cref{partMunn}. Notice initially that $s^*s \in X_{s^*s}=X_{s^*}\subseteq D_{\pi_{\sigma}(s^*)}$ which means that $(\pi_{\sigma}(s),s^*s)$ is a well-defined element of $\Se/\sigma \ltimes_\alpha E(\Se)$. Define $\phi\colon \Se \to \Se/\sigma \ltimes_\alpha E(\Se)$ by $\phi(s)=(\pi_{\sigma}(s),s^*s)$.
	
Let $(s,t)\in \Se^{(2)}$ and lets check that $\phi$ is a morphism of semigroupoids. Since $t^*t\in X_{t^*}\subseteq D_{\pi_{\sigma}(t)}$, we have $\alpha_{\pi_{\sigma}(t)}(t^*t)=tt^*tt^*=tt^*$. Moreover, $s^*stt^*\leq tt^*$ which gives $s^*stt^*\in X_t$ and $\alpha_{\pi_{\sigma}(t^*)}(s^*stt^*)=t^*s^*stt^*t=(st)^*(st)$. We thus have $(\phi(s),\phi(t))\in (\Se/\sigma \ltimes_\alpha E(\Se))^{(2)}$ and
\begin{align*}
\phi(s)\phi(t) &
= \big(\pi_{\sigma}(s),s^*s\big)\big(\pi_{\sigma}(t),t^*t\big)
= \Big(\pi_{\sigma}(st),\alpha_{\pi_{\sigma}(t^*)}\big(s^*s\alpha_{\pi_{\sigma}(t)}(t^*t)\big)\Big) \\ 
& = \big(\pi_{\sigma}(st),\alpha_{\pi_{\sigma}(t^*)}(s^*stt^*)\big)
= \big(\pi_{\sigma}(st),(st)^*(st)\big)
= \phi(st).
\end{align*}

It is also clear that $\phi$ is a strong morphism since $(\phi(s),\phi(t))\in (\Se/\sigma \ltimes_\alpha E(\Se))^{(2)}$ implies $(s^*s,tt^*)\in\Se^{(2)}$ which further implies that $(s,t)\in\Se^{(2)}$.

We now prove that $\phi$ is injective. If $\phi(s)=\phi(t)$ for $s,t\in\Se$, then $s$ and $t$ are $\sigma$-congruent and $s^*s=t^*t$. By \cref{lema-tecnico-idempotentes}, we have $s=ss^*s=st^*t=ts^*s=tt^*t=t$.
	
Finally, to prove that $\phi$ is surjective, let $(\pi_{\sigma}(s),e) \in \Se/\sigma\ltimes_\alpha E(\Se)$. Since $e\in D_{\pi_{\sigma}(s^*)}$, there exists $t\in\Se$ such that $(s,t)\in\sigma$ and $e\in X_{t^*}$. Noticing that $(t,e)\in S^{(2)}$, $t^*te=e$ and $(s,te)\in\sigma$ we have 
$$\phi(te)=\big(\pi_{\sigma}(te),(te)^*(te)\big)=\big(\pi_{\sigma}(te),t^*te\big)=\big(\pi_{\sigma}(te),e\big)=\big(\pi_{\sigma}(s),e\big).$$	
\end{proof}


\bibliographystyle{acm}
\bibliography{references.bib}
\end{document}